\newcommand{\la}{\lambda}
\newcommand{\mcc}{{\raise .45ex \hbox{c}}}
\newcommand{\tsym}{$\mathbb{T}^2$-symmetric}
\newcommand{\etsym}{essentially $\mathbb{T}^2$-symmetric}
\newcommand{\undist}{\overline{\mathbb{D}^2}\setminus \mathbb{T}^2}
\newcommand{\cbidisk}{\overline{\mathbb{D}^2}}
\newcommand{\ip}[2]{\langle #1 , #2 \rangle}
\theoremstyle{plain}
\newtheorem{theorem}[equation]{Theorem}
\newtheorem{corollary}[equation]{Corollary}
\newtheorem{prop}[equation]{Proposition}
\newtheorem*{prop*}{Proposition}
\newtheorem{lemma}[equation]{Lemma}
\theoremstyle{definition}
\newtheorem{definition}[equation]{Definition}
\newtheorem{remark}[equation]{Remark}
\newtheorem{claim}[equation]{Claim}
\numberwithin{equation}{section}
\title{Polynomials defining distinguished varieties}
\author{Greg Knese}
\address{University of California, Irvine \\
         Irvine, CA 92614-3875}
\urladdr{http://www.math.uci.edu/~gknese}
\keywords{distinguished varieties, stable polynomials, bidisk}
\subjclass{47A57, 47A13, 14M99, 32A10, 32A60, 14H50}
\email{gknese@uci.edu}
\date{\today}
\begin{document}
\bibliographystyle{plain}

\begin{abstract}  Using a sums of squares formula for two variable
  polynomials with no zeros on the bidisk, we are able to give a new
  proof of a representation for distinguished varieties.  For
  distinguished varieties with no singularities on the two-torus, we
  are able to provide extra details about the representation formula
  and use this to prove a bounded extension theorem.
\end{abstract}

\maketitle
\section{Introduction}

Let $\mathbb{D}$ be the unit disk, $\mathbb{T}$ be the unit circle,
and $\mathbb{E}$ be the set $\mathbb{C}\setminus
\overline{\mathbb{D}}$ in $\mathbb{C}$.  Let $\mathbb{D}^2 =
\mathbb{D} \times \mathbb{D}$ be the unit bidisk in $\mathbb{C}^2$.

Broadly speaking, this paper continues the study of plane algebraic
curves (algebraic varieties in $\mathbb{C}^2$) and their interaction
with the two dimensional torus $\mathbb{T}^2 = \mathbb{T} \times
\mathbb{T}$.  When viewing curves in this 
light, one is presented with
a number of interesting classes of curves.  For example, curves
that do not intersect the closed bidisk are described by \emph{stable}
polynomials and are the subject of Geronimo-Woerdeman \cite{GW04},
\cite{GW06} and Knese \cite{gK08}.  Also, curves $V$ for which $V\cap
\mathbb{T}^2$ is a 
determining set for holomorphic functions on $V$
are called \emph{toral} and are the subject of Agler-M\mcc
Carthy-Stankus \cite{AMS06}.  Both types of curves are intimately
related to inner functions and Pick interpolation problems on the
bidisk.  (Stable polynomials will, in fact, play an important role in
this paper later on.)  

In this paper we continue the study of a third type of curve, the
\emph{distinguished varieties}.

\begin{definition} A non-empty set $V$ in $\mathbb{C}^2$ is a
  \emph{distinguished variety} if there is a polynomial $p \in
  \mathbb{C}[z,w]$ such that
\[
V = \{(z,w) \in \mathbb{D}^2: p(z,w) = 0\}
\]
and such that $V$ exits the bidisk through the distinguished boundary:
\[
\overline{V}\cap \partial (\mathbb{D}^2) = \overline{V}\cap (\partial
\mathbb{D})^2
\]
Here we are taking the closure of $V$ in $\overline{\mathbb{D}^2}$.
\end{definition}

A simple example is the variety $\{(z,w) \in \mathbb{D}^2:
z^3=w^2\}$.

Distinguished varieties were defined in Agler-M\mcc Carthy\cite{AM05}
(although they essentially have appeared as far back as W. Rudin
\cite{wR69}).  They have also appeared in the paper by P. Vegulla
\cite{pV08} and Agler-M\mcc Carthy \cite{AM06}.  Aside from their
aesthetic appeal (which we hope to illustrate in this article), here
are a few reasons why we think distinguished varieties are
interesting.

First, every bounded planar domain with finitely many real analytic
boundary curves is biholomorphic to a distinguished variety (this is
proven in Vegulla \cite{pV08}, relying on a result of Fedorov
\cite{sF91} on the existence of ``unramified separating pairs of inner
function'').  Hence, studying distinguished varieties provides a
unified way of studying function theory on planar domains.  

Second, distinguished varieties include many examples of complex
spaces with singularities which have been relatively unstudied in the
realm of spaces of analytic functions (and bounded analytic functions
more specifically).  In many cases, the function theory on a
distinguished variety with singularities is isomorphic to function
theory on the disk (or some other domain) with some type of constraint
imposed (see Agler-M\mcc Carthy \cite{AM07} for more on this).  For
example, the paper \cite{DPRS08} studies Nevanlinna-Pick interpolation
for bounded analytic functions on the disk with the added condition
$f'(0)=0$.  This class of functions is isomorphic to the bounded
analytic functions on the distinguished variety $\{(z,w)\in
\mathbb{D}^2: z^3 = w^2\}$.

Third, distinguished varieties are important in two variable matrix
theory and in Nevanlinna-Pick interpolation on the bidisk.  Indeed,
that is the motivation of the original article Agler-M\mcc Carthy
\cite{AM05}, and we refer the reader to that paper for more details.

In this paper we give a new proof of a representation formula for
distinguished varieties (Theorem \ref{repthm} below) proved in
Agler-M\mcc Carthy \cite{AM05}.  Our new proof is more elementary than
the original proof in that it involves studying polynomials that
define distinguished varieties directly, as opposed to the approach of
the original proof which consisted of constructing certain probability
measures on the boundary of the distinguished variety and studying the
resulting function theory.

More significantly perhaps, we construct a family of ``sums of
squares'' formulas for polynomials defining distinguished varieties
and related polynomials (see Theorems \ref{symstablethm} and
\ref{sosdist}). These extend known sums of squares formulas for stable
polynomials (as in \cite{GW04} and \cite{gK08}) and polynomials with
no zeros on $\mathbb{D}^2$ (as in \cite{CW99} and \cite{gK08}). (See
also Theorems \ref{stablethm} and \ref{GWthm} below.) This approach
allows us to get more detailed information about the representation
formula for distinguished varieties when the variety in question has
no singularities on $\mathbb{T}^2$ (see Theorem \ref{refinerep}).

In turn, this allows us to prove a novel bounded extension theorem
(with estimates) for distinguished varieties with no singularities on
$\mathbb{T}^2$ (see Theorem \ref{extendthm}).  Namely, a polynomial
$f$ of two variables on $V$ can be extended to a rational function $F$
whose supremum norm on $\mathbb{D}^2$ is bounded by a constant times
the supremum norm of $f$ on $V$.  The literature is scattered with a
number of results of this type; here we mention a few.  The paper
\cite{eS75} by Stout, gives necessary and sufficient conditions for
when such an extension can be performed on discs properly embedded in
the polydisc, although due to its more abstract approach does not
provide details about constants.  The paper Adachi-Andersson-Cho
\cite{AAC99} proves extension theorems for analytic subvarieties of
analytic polyhedra using integral formulas.  We believe our extension
theorem, due to its concrete algebraic approach, complements these
other results.

\section{Statements of results} 
\label{results} 

We say a two variable polynomial $p \in \mathbb{C}[z,w]$ has degree
$(n,m)$ if it has degree $n$ in $z$ and $m$ in $w$; we say $p$ has
degree \emph{at most} $(n,m)$ if it has degree at most $n$ in $z$ and
degree at most $m$ in $w$.  Recall that a rational matrix valued
function $\Phi: \mathbb{D} \to \mathbb{C}^{m\times m}$ on the disk is
\emph{inner} if $\Phi$ is unitary-valued on the unit circle.

The main theorem is the following; see Section \ref{mainthmsec} for
our new proof.

\begin{theorem}[Agler-M\mcc Carthy \cite{AM05}] \label{repthm} Let $V$
  be a distinguished variety, defined as the zero set of a polynomial
  $p \in \mathbb{C}[z,w]$ of minimal degree $(n,m)$.  Then, there is an $(m+n)\times
  (m+n)$ unitary matrix $U$ which we write in block form as
\[
U = \begin{matrix} & \begin{matrix} \mathbb{C}^m & \mathbb{C}^n
  \end{matrix} \\
\begin{matrix} \mathbb{C}^m \\ \mathbb{C}^n \end{matrix} &
\begin{pmatrix} A & B \\ C & D \end{pmatrix}
\end{matrix}
\]
such that
\begin{itemize}
\item $D$ has no unimodular eigenvalues,
\item $p(z,w)$ is a constant multiple of
\[
\det \begin{pmatrix} A - wI_m & zB \\ C & zD - I_n \end{pmatrix},
\text{ and}
\]

\item defining the following rational matrix valued inner function:
\[
\Phi(z) = A + zB(I_n - zD)^{-1} C,
\]
we have
\[
V = \{(z,w) \in \mathbb{D}^2: \det(wI_m - \Phi(z))=0\}.
\]
\end{itemize}

\end{theorem}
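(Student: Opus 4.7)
The plan is to extract the unitary $U$ from a sums of squares decomposition of $p$ on $\mathbb{T}^2$. The first step is to show that $p$ is \etsym{} in the sense that its $(n,m)$-reflection
\[
\tilde p(z,w) := z^n w^m \overline{p(1/\bar z, 1/\bar w)}
\]
equals $\lambda p$ for some unimodular $\lambda$. On $\mathbb{T}^2$ one has $(1/\bar z, 1/\bar w) = (z,w)$, so $\tilde p$ and $p$ vanish on the same set $\overline{V}\cap \mathbb{T}^2$; since $V$ exits $\overline{\mathbb{D}^2}$ only through $\mathbb{T}^2$, the full complex zero sets of $p$ and $\tilde p$ must agree in a neighborhood of the torus, and minimality of the bidegree $(n,m)$ upgrades this into a global proportionality.

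Next I would invoke the sums of squares formula for \etsym{} polynomials (Theorems \ref{symstablethm}/\ref{sosdist}) to obtain polynomial column vectors $E(z,w)\in\mathbb{C}^m$ and $F(z,w)\in\mathbb{C}^n$ of controlled bidegrees satisfying, on $\mathbb{T}^2$, an identity of the schematic form
\[
|p(z,w)|^2 = (1-|w|^2)\,\|E(z,w)\|^2 + (1-|z|^2)\,\|F(z,w)\|^2.
\]
Rearranging yields a finite-dimensional isometry between two subspaces of $\mathbb{C}^m\oplus\mathbb{C}^n$; a standard lurking isometry argument then extends it to an $(m+n)\times(m+n)$ unitary $U$ with the stated block form $\begin{pmatrix} A & B \\ C & D \end{pmatrix}$. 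The determinantal identity is a Schur-complement computation,
\[
\det\begin{pmatrix} A - wI_m & zB \\ C & zD - I_n \end{pmatrix} = \det(zD - I_n)\,\det(\Phi(z) - wI_m),
\]
and unwinding the isometry identifies the right-hand side with $p$ up to a constant. The inner function property of $\Phi$ is automatic once $U$ is unitary and $D$ has no eigenvalues on $\mathbb{T}$; the latter is where minimality of bidegree matters, since a unimodular eigenvalue of $D$ would force $\det(zD-I_n)$ and $\det(\Phi(z)-wI_m)$ to share a common $z$-factor, giving a lower-bidegree representative of $p$. The identification $V = \{\det(wI_m-\Phi(z))=0\}\cap \mathbb{D}^2$ then follows because $\det(zD-I_n)$ is nonvanishing for $|z|<1$.

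I expect the main obstacle to be the sums of squares step itself. The existing formulas of \cite{GW04}, \cite{CW99}, \cite{gK08} require $p$ to be nonvanishing on $\overline{\mathbb{D}^2}$ or at least on $\mathbb{D}^2$, whereas here $p$ genuinely vanishes on a curve in $\mathbb{T}^2$. The natural route is to approximate $p$ by stable polynomials $p_\varepsilon$ (for example by dilating the variables slightly inward), apply the known sums of squares formulas (Theorems \ref{stablethm}/\ref{GWthm}) to each $p_\varepsilon$, and extract a limit together with uniform bidegree control on the $E_\varepsilon$ and $F_\varepsilon$. That uniform control is precisely what pins the matrix size at $(m+n)\times(m+n)$ rather than something larger.
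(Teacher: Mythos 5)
Your overall architecture is right and matches the paper's: establish that $p$ is essentially $\mathbb{T}^2$-symmetric, obtain a sums-of-squares identity, feed it into a lurking isometry to produce $U$, compute the determinant via a Schur complement, and rule out unimodular eigenvalues of $D$ by minimality (or directly by the distinguished variety property, as the paper does). But the middle step --- the sums-of-squares formula --- contains a genuine gap, and you have correctly flagged it as the hard part without actually closing it.

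First, the ``schematic form'' you write is wrong. If you require
\[
|p(z,w)|^2 = (1-|w|^2)\,\|E(z,w)\|^2 + (1-|z|^2)\,\|F(z,w)\|^2
\]
on $\mathbb{T}^2$, the right-hand side vanishes identically there, so the equation says nothing (both sides are zero), and as a global Hermitian polynomial identity it would force $p$ to vanish on all of $\mathbb{T}^2$, which is false. The lurking isometry needs a polarized identity that holds on all of $V$ (not $V\cap\mathbb{T}^2$), and the correct one (Theorem~\ref{sosdist}) has the quite different form: for all $(z,w),(Z,W)\in V$,
\[
(1-z\bar Z)\,\ip{\vec{P}(z,w)}{\vec{P}(Z,W)} = (1-w\bar W)\,\ip{\vec{Q}(z,w)}{\vec{Q}(Z,W)},
\]
which is the residue of a larger identity once the terms containing $p$ and $\overline p$ are killed by restricting to $V$.

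Second, the proposed repair --- approximate $p$ by stable polynomials $p_\varepsilon$ via an inward dilation and take a limit --- cannot work, because $p$ vanishes on the curve $V\cap\mathbb{D}^2$, which has accumulation points in the open bidisk. No dilation removes interior zeros, so you cannot land in the hypotheses of Cole--Wermer or Geronimo--Woerdeman by rescaling $p$ itself. The paper's route is genuinely different: pass from $p$ to $q(z,w):=z^n p(1/z,w)$ (Lemma~\ref{corrlemma}), which is $\mathbb{T}^2$-symmetric with no zeros on $\cbidisk\setminus\mathbb{T}^2$; then show (Theorem~\ref{thm:tildenozeros}) that the reflected derivative combination $a\widetilde{q_z}+b\widetilde{q_w}$ has no zeros on $\mathbb{D}^2$; apply Cole--Wermer to \emph{that} polynomial rather than to $p$; and use the algebraic identities of Lemmas~\ref{refformulas} and~\ref{modlemma} to translate the resulting sum of squares back into a statement about $p$. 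That passage to the reflected derivatives is the missing idea in your proposal, and without it there is no object to which the known sums-of-squares theorems apply.

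Your remaining steps (lurking isometry, Schur complement, nonvanishing of $\det(I_n-zD)$ for $|z|<1$, and hence the identification of $V$) are essentially identical to the paper's, so if you patch the sums-of-squares step with the reflected-derivative device the argument will go through.
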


Naturally, the roles of $z$ and $w$ can be reversed above to give
similar statements.  The original article \cite{AM05} also proves a
converse of the above; namely, if $\Phi$ is a matrix valued rational
inner function on $\mathbb{D}$ then
\[
\{(z,w) \in \mathbb{D}^2: \det(wI_m - \Phi(z))\}
\]
is a distinguished variety.  We are able to provide some additional
information about this representation when $V$ has no singularities on
$\mathbb{T}^2$.  This is the content of Theorem \ref{refinerep}.

We now detail the path to our new proof of Theorem \ref{repthm}.

\begin{prop*} When a distinguished variety is extended to all of
  $\mathbb{C}^2$ using a defining polynomial $p$ of minimal degree,
  the resulting variety
\[
V' = \{(z,w) \in \mathbb{C}^2: p(z,w) = 0\} \subset \mathbb{C}^2
\]
satisfies $V' \subset \mathbb{D}^2 \cup \mathbb{T}^2 \cup
\mathbb{E}^2$.
\end{prop*}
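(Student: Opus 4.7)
The plan is to factor $p$ into irreducibles, rule out each ``mixed'' boundary region of an irreducible component in turn, and finish with a connectedness argument on the normalization that exploits a reflection symmetry of the minimal polynomial.

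By minimality $p$ is square-free; write $p = p_1 \cdots p_k$ into distinct irreducibles and set $V'_i = \{p_i = 0\}$. Each $V'_i$ satisfies $V'_i \cap \mathbb{D}^2 \neq \emptyset$ (otherwise $p/p_i$ would be a lower-degree defining polynomial for $V$), and hence $\overline{V'_i \cap \mathbb{D}^2}\cap\partial\mathbb{D}^2 \subset \overline{V}\cap\partial\mathbb{D}^2 \subset \mathbb{T}^2$. I rule out points of $V'_i \cap (\mathbb{D}\times\mathbb{T})$ as follows: parametrize the normalization of $V'_i$ near such a point $(z_0,w_0)$ by holomorphic $(z(t),w(t))$. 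If $w$ were constant, $V'_i$ would be the vertical line $\{w=w_0\}$, which misses $\mathbb{D}^2$ since $|w_0|=1$; so $w$ is nonconstant, and the maximum principle forces $|w(t)|<1$ on a nonempty open set. The resulting points of $V'_i \cap \mathbb{D}^2 \subset V$ accumulate at $(z_0,w_0)$, so $(z_0,w_0) \in \overline{V} \cap (\mathbb{D}\times\mathbb{T})$, contradicting the distinguished-boundary condition. The symmetric argument gives $V'_i \cap (\mathbb{T}\times\mathbb{D}) = \emptyset$.

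Next I establish the reflection identity $\tilde p = cp$, where $\tilde p(z,w) := z^n w^m \overline{p(1/\bar z, 1/\bar w)}$. One checks $\tilde p$ has bidegree $(n,m)$ (using $p(0,w), p(z,0) \not\equiv 0$, which hold because $\{z=0\}, \{w=0\}$ cannot be components of $V'$ by a variant of the argument above). On $\mathbb{T}^2$ we have $\tilde p = z^n w^m \bar p$, so $\tilde p$ vanishes on the nonempty real $1$-dimensional boundary curve $\overline{V'_i \cap \mathbb{D}^2}\cap\mathbb{T}^2 \subset V'_i$. The identity theorem applied to the holomorphic function $\tilde p|_{V'_i}$ on the irreducible $1$-complex-dimensional variety $V'_i$ forces $\tilde p|_{V'_i}\equiv 0$, so $p_i \mid \tilde p$; taking the product and matching bidegrees yields $\tilde p = cp$. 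Consequently $V'$ is invariant (off the coordinate axes) under the involution $\sigma(z,w) = (1/\bar z, 1/\bar w)$. Since $\sigma$ interchanges $\mathbb{D}\times\mathbb{T} \leftrightarrow \mathbb{E}\times\mathbb{T}$ and $\mathbb{T}\times\mathbb{D} \leftrightarrow \mathbb{T}\times\mathbb{E}$, the previous paragraph now also rules out $V'_i \cap (\mathbb{T}\times\mathbb{E})$ and $V'_i \cap (\mathbb{E}\times\mathbb{T})$, so that $V'_i \cap \{|z|=1\} = V'_i \cap \{|w|=1\} = V'_i \cap \mathbb{T}^2$.

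For the final step, pass to the normalization $\pi\colon \tilde V_i \to V'_i$, a connected Riemann surface. At any preimage of a point in $V'_i \cap \mathbb{T}^2$, a local coordinate $t$ makes $z(t), w(t)$ holomorphic and nonconstant, and the identification above forces $\{|z(t)|=1\} = \{|w(t)|=1\}$ as a single real-analytic curve through $t=0$. The curve separates a neighborhood into two sides on which the signs of $|z|-1$ and $|w|-1$ are constant, and these signs either agree (a \emph{matching} preimage, with sides in $\mathbb{D}^2$ and $\mathbb{E}^2$) or disagree (an \emph{opposite} preimage, with sides in $\mathbb{D}\times\mathbb{E}$ and $\mathbb{E}\times\mathbb{D}$). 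With $A = \overline{\pi^{-1}(V'_i \cap (\mathbb{D}^2\cup\mathbb{E}^2))}$ and $B = \overline{\pi^{-1}(V'_i \cap ((\mathbb{D}\times\mathbb{E})\cup(\mathbb{E}\times\mathbb{D})))}$ (closures in $\tilde V_i$), the local picture shows $A$ meets $\pi^{-1}(V'_i \cap \mathbb{T}^2)$ only at matching preimages and $B$ only at opposite ones; so $A$ and $B$ are disjoint closed sets covering $\tilde V_i$. Connectedness and $A \supseteq \pi^{-1}(V'_i \cap \mathbb{D}^2) \neq \emptyset$ then force $B = \emptyset$, completing the proof. The main obstacle will be the reflection step, which requires that $\overline{V'_i \cap \mathbb{D}^2}\cap\mathbb{T}^2$ be genuinely $1$-real-dimensional in $V'_i$ (so that the identity theorem actually applies) -- this is the distinguished-boundary condition's content at the level of irreducible components.
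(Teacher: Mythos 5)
Your approach is genuinely different from the paper's and, modulo two fixable gaps, correct. The paper rules out zeros on $\mathbb{D}\times\mathbb{T}$ by a Weierstrass-preparation argument, then shows the number $N$ of roots of $p(z,\cdot)$ in $\mathbb{D}$ is constant via the argument principle, shows the power sums of those $N$ roots extend by Schwarz reflection to rational functions of $z$, and finally builds a rational $R(z,w)=\prod(w-w_j(z))$ whose numerator must be a constant multiple of $p$ by irreducibility and minimality. You instead rule out the mixed faces $\mathbb{D}\times\mathbb{T}$, $\mathbb{T}\times\mathbb{D}$ directly on the normalization via the open mapping theorem, establish $\tilde p = cp$ by an identity-theorem argument on each irreducible component, use that symmetry to eliminate $\mathbb{E}\times\mathbb{T}$ and $\mathbb{T}\times\mathbb{E}$, and close with a clean connectedness argument on $\tilde V_i$. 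Interestingly, the paper proves $\tilde p = cp$ as a \emph{separate} proposition (Proposition \ref{symprop}) whose proof relies on the already-proved conclusion $V'\subset\mathbb{D}^2\cup\mathbb{T}^2\cup\mathbb{E}^2$; you derive the symmetry first, by a different route that avoids that dependence, and then exploit it. Note also, in your reflection step, that since $\sigma$ may permute the $V'_i$, what you actually use is that $\sigma(V'_i)=V'_{\sigma(i)}$ together with the fact that \emph{every} component avoids $\mathbb{D}\times\mathbb{T}$ and $\mathbb{T}\times\mathbb{D}$; this works, but is worth saying.

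The gap you flag -- that $\overline{V'_i\cap\mathbb{D}^2}\cap\mathbb{T}^2$ is genuinely $1$-real-dimensional -- is real, but what you actually need is only that this set is infinite, and that follows from what you have already established: at any $(z_0,w_0)$ in it, take a local normalization parameter $t$ and let $\Omega=\{t:|z(t)|<1,\ |w(t)|<1\}$. Then $\Omega$ is nonempty, and $\Omega\cup\{0\}$ is not the whole coordinate disk (open mapping applied to the nonconstant $z(t)$ produces $|z(t)|>1$ arbitrarily near $0$), so $\partial\Omega\setminus\{0\}\neq\emptyset$; and any $t^*\in\partial\Omega$ satisfies $|z(t^*)|=|w(t^*)|=1$ because the exclusions of $\mathbb{D}\times\mathbb{T}$ and $\mathbb{T}\times\mathbb{D}$ forbid exactly one of the moduli from being $1$. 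Hence the boundary trace is infinite, the vanishing locus of $\tilde p$ on $V'_i$ is an infinite algebraic subset of an irreducible curve, so $p_i\mid\tilde p$. The second, smaller, issue is in your final local picture: $\{|z(t)|=1\}$ is generally not ``a single real-analytic curve with two sides'' -- writing $\log|z(t)|=\mathrm{Re}\,\phi(t)$ with $\phi(t)=at^p+\cdots$ holomorphic and nonconstant, the zero set is $2p$ analytic arcs meeting at $0$, creating $2p$ sectors. The matching/opposite dichotomy still holds because the sign of $\mathrm{Re}\,\phi$ alternates from sector to sector (and likewise for $w$, on the \emph{same} sectors, since the two zero sets coincide), so the relative sign of $|z|-1$ and $|w|-1$ is the same in all sectors around a given preimage; but you should argue via this harmonic-function sign alternation rather than the ``two sides'' picture.
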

This is Proposition \ref{compprop}.  This gives another way of
defining distinguished varieties that is in many ways easier to work
with.  Namely, instead of the original definition (which looks at
subvarieties of $\mathbb{D}^2$), we can think of distinguished
varieties as algebraic subvarieties of $\mathbb{C}^2$ satisfying $V
\subset \mathbb{D}^2\cup \mathbb{T}^2 \cup \mathbb{E}^2$.  Once
Proposition \ref{compprop} is established we shall think of
distinguished varieties in this way and we use the following
terminology.

\begin{definition} \label{pdefines} We say that a polynomial $p$
  \emph{defines a distinguished variety} if
\[
\{(z,w) \in \mathbb{C}^2: p(z,w) = 0\} \subset \mathbb{D}^2 \cup
\mathbb{T}^2 \cup \mathbb{E}^2.
\]
\end{definition}

\begin{definition} Let $q \in \mathbb{C}[z,w]$ be a polynomial of
  degree at most $(n,m)$.  The \emph{reflection at the degree $(n,m)$}
  of $q$ is defined to be the polynomial $\tilde{q}$ given by
\[
\tilde{q}(z,w) := z^n w^m
\overline{q\left(\frac{1}{\bar{z}},\frac{1}{\bar{w}} \right)}
\]
If the degree at which reflection is applied is not obvious from
context we will state the degree of reflection explicitly (see also
Remark \ref{reflectderivatives}).
\end{definition}

Notice that $|q(z,w)| = |\tilde{q}(z,w)|$ for all $(z,w) \in
\mathbb{T}^2$.  

\begin{definition} We say $q \in \mathbb{C}[z,w]$ is \emph{\etsym} if
\[
q(z,w) = c \tilde{q}(z,w)
\]
for some unimodular constant $c$, and $q$ is \emph{\tsym} if
\[
q(z,w) = \tilde{q}(z,w).
\]
\end{definition}

\begin{prop*} A polynomial $p$ defining a distinguished variety must be
  \etsym.  
\end{prop*}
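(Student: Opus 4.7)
The strategy is to reduce to the case of irreducible $p$ and then exploit the fact that on $\mathbb{T}^2$ the zero sets of $p$ and $\tilde p$ coincide. As a preliminary observation, if $z$ (or $w$) divided $p$, then $\{z=0\}=\{0\}\times\mathbb{C}$ (respectively $\mathbb{C}\times\{0\}$) would lie in $\{p=0\}$, yet points such as $(0,2)$ are not in $\mathbb{D}^2\cup\mathbb{T}^2\cup\mathbb{E}^2$; hence $p$ is divisible by neither $z$ nor $w$, and the same holds for every irreducible factor of $p$. Consequently, if $p$ has bidegree exactly $(n,m)$, then the reflection $\tilde p$ at $(n,m)$ also has exact bidegree $(n,m)$, and similarly for any irreducible factor reflected at its own bidegree.

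Next I would reduce to the irreducible case. Write $p=a\prod_k p_k^{e_k}$ with $a$ a nonzero constant and each $p_k$ irreducible of bidegree $(n_k,m_k)$. Each $\{p_k=0\}\subset\{p=0\}\subset\mathbb{D}^2\cup\mathbb{T}^2\cup\mathbb{E}^2$, so each $p_k$ inherits the defining property. A direct calculation with the reflection formula shows it is multiplicative when each factor is reflected at its exact bidegree: $\tilde p=\bar a\prod_k \tilde p_k^{e_k}$. Thus it suffices to prove that each irreducible $p_k$ satisfies $p_k=c_k\tilde p_k$ for some unimodular $c_k$.

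So assume $p$ is irreducible of bidegree $(n,m)$; by the axis remark above, $n,m\ge 1$. For all but finitely many $z\in\mathbb{T}$, the one-variable polynomial $w\mapsto p(z,w)$ has degree exactly $m\ge 1$ and so possesses at least one root $w_0\in\mathbb{C}$; any such root must satisfy $|w_0|=1$ because $(z,w_0)\in\mathbb{D}^2\cup\mathbb{T}^2\cup\mathbb{E}^2$ together with $|z|=1$ forces this. Therefore $\{p=0\}\cap\mathbb{T}^2$ is infinite. Since $|p|=|\tilde p|$ on $\mathbb{T}^2$, the polynomial $\tilde p$ vanishes on this infinite subset of the irreducible affine curve $\{p=0\}$; because two coprime polynomials in $\mathbb{C}[z,w]$ share only finitely many zeros, this forces $p\mid\tilde p$. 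Since $\tilde p$ and $p$ have the same bidegree, the quotient is a nonzero constant $c$, and comparing moduli on $\mathbb{T}^2$ forces $|c|=1$. Thus $p=c^{-1}\tilde p$, exhibiting $p$ as \etsym.

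The main obstacle I anticipate is the bidegree bookkeeping: ensuring that reflection at exact bidegree is multiplicative, and that $p$ and $\tilde p$ have the same bidegree so that the divisibility conclusion produces a constant quotient. The axis argument ruling out $z$ and $w$ as factors of $p$ is the key input that makes both bookkeeping steps succeed.
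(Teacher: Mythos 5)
Your proof is correct, and for the crucial step it takes a genuinely different route from the paper. Both arguments reduce to the irreducible case and exploit the modulus identity $|p| = |\tilde p|$ on $\mathbb{T}^2$, but the paper argues fiber-by-fiber: for each fixed $z\in\mathbb{T}$ it observes that every root of $p(z,\cdot)$ lies on $\mathbb{T}$ and coincides (with multiplicity) with a root of $\tilde p(z,\cdot)$, giving the explicit coefficient identity $\tilde p_0(z)\,p(z,w)=p_m(z)\,\tilde p(z,w)$ first on $\mathbb{T}\times\mathbb{C}$ and hence, by analytic continuation, on all of $\mathbb{C}^2$; irreducibility then yields proportionality. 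You instead produce an infinite set of common zeros of $p$ and $\tilde p$ in $\mathbb{T}^2$ and invoke the B\'ezout-type fact that coprime two-variable polynomials have only finitely many common zeros, so irreducibility forces $p\mid\tilde p$, and the bidegree bookkeeping (for which your axis observation is exactly the right input) makes the quotient a constant. Your route is a bit shorter and avoids the coefficient manipulation; the paper's is more explicit and needs no appeal to B\'ezout.

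One small inaccuracy of attribution: you write that $n,m\ge 1$ ``by the axis remark above,'' but the axis remark only rules out $z\mid p$ and $w\mid p$, i.e.\ $p$ being a constant times $z$ or $w$. An irreducible $p$ of bidegree $(1,0)$ could in principle be $c(z-z_0)$ with $z_0\ne 0$, which the axis remark alone does not exclude. The conclusion is of course still true, and for essentially the same reason: the zero set of any such $p$ contains the vertical line $\{z_0\}\times\mathbb{C}$, which meets both $\mathbb{C}\times\mathbb{D}$ (at $(z_0,0)$) and $\mathbb{C}\times\mathbb{E}$ (at $(z_0,2)$) and therefore cannot lie in $\mathbb{D}^2\cup\mathbb{T}^2\cup\mathbb{E}^2$. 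Stating the observation in this broader form (no irreducible factor of $p$ can be independent of $z$ or of $w$) would make both the bidegree bookkeeping and the $n,m\ge 1$ claim fully justified by a single remark.
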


This is Proposition \ref{symprop}. An \etsym\ polynomial can always be
multiplied by a unimodular constant to make it \tsym\ (i.e. if $p=c
\tilde{p}$, replace $p$ with $\sqrt{c} p$ for some choice of
$\sqrt{c}$).  Since we are mostly concerned with zero sets, for
simplicity we will always make this modification unless otherwise
stated.

Because of these facts there is a direct correspondence between
polynomials that define distinguished varieties and
$\mathbb{T}^2$-symmetric polynomials with no zeros on the set
$\overline{\mathbb{D}^2} \setminus \mathbb{T}^2$.
\begin{lemma} \label{corrlemma} If $p$ is a (\tsym) polynomial of degree
  $(n,m)$ defining a distinguished variety, then
\[
q(z,w) := z^n p\left( \frac{1}{z}, w\right)
\]
is a \tsym\ polynomial of degree $(n,m)$ with no zeros $\undist$. 
\end{lemma}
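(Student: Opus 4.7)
The plan is to verify three things about $q(z,w) = z^n p(1/z, w)$: that it has degree exactly $(n,m)$, that it is $\mathbb{T}^2$-symmetric, and that it has no zeros on $\overline{\mathbb{D}^2}\setminus\mathbb{T}^2$. Writing $p(z,w)=\sum_{j,k} a_{jk}\, z^j w^k$ with $j\le n$, $k\le m$, direct substitution gives $q(z,w)=\sum_{j,k} a_{n-j,k}\, z^j w^k$. The coefficient of $z^n$ in $q$ is $p(0,w)$, which cannot vanish identically: otherwise $\{0\}\times\mathbb{C}\subset V'$, and any $w$ with $|w|>1$ produces $(0,w)\in\mathbb{D}\times\mathbb{E}$, outside $\mathbb{D}^2\cup\mathbb{T}^2\cup\mathbb{E}^2$. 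The coefficient of $w^m$ in $q$ is the $z$-reversal of the (nonzero) coefficient of $w^m$ in $p$, still nonzero, so $q$ has degree exactly $(n,m)$. For the symmetry, $\mathbb{T}^2$-symmetry of $p$ at bidegree $(n,m)$ reads $a_{jk}=\overline{a_{n-j,m-k}}$; with $b_{jk}:=a_{n-j,k}$ the coefficients of $q$, the computation
\[
\overline{b_{n-j,m-k}} = \overline{a_{j,m-k}} = a_{n-j,k} = b_{jk}
\]
is exactly the relation $\tilde{q}=q$ at bidegree $(n,m)$.

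For the zero-set claim, fix $(z_0,w_0)\in\overline{\mathbb{D}^2}\setminus\mathbb{T}^2$. If $z_0\neq 0$, then $q(z_0,w_0)=0$ iff $(1/z_0,w_0)\in V'$; since $|1/z_0|\ge 1$ rules out $\mathbb{D}^2$, $|w_0|\le 1$ rules out $\mathbb{E}^2$, and $(z_0,w_0)\notin\mathbb{T}^2$ rules out $\mathbb{T}^2$ for $(1/z_0,w_0)$, we conclude $(1/z_0,w_0)\notin V'$ and hence $q(z_0,w_0)\neq 0$.

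The substantive case is $z_0=0$, where $q(0,w_0)=p_n(w_0)$ for $p_n$ the leading-in-$z$ coefficient of $p$; the goal is to show $p_n\neq 0$ on $\overline{\mathbb{D}}$. For $|w_0|=1$, the $\mathbb{T}^2$-symmetry of $p$ implies $|p_n(w)|=|p_0(w)|$ on $\mathbb{T}$ (immediate from $a_{n,k}=\overline{a_{0,m-k}}$), so $p_n(w_0)=0$ forces $p(0,w_0)=0$, i.e., $(0,w_0)\in V'\cap(\mathbb{D}\times\mathbb{T})$, contradicting $V'\subset\mathbb{D}^2\cup\mathbb{T}^2\cup\mathbb{E}^2$. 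For $|w_0|<1$ with $p_n(w_0)=0$, first observe $p(\cdot,w_0)\not\equiv 0$ (else $V'\supset\mathbb{C}\times\{w_0\}$ meets $\mathbb{E}\times\mathbb{D}$ when $|z|>1$). I want to produce a sequence $(z_j,w_j)\in V'$ with $|z_j|\to\infty$ and $w_j\to w_0$: if instead all roots of $p(\cdot,w)$ stayed inside a fixed disk $|z|\le M$ for $w$ in a punctured neighborhood of $w_0$ avoiding the zeros of $p_n$, Vieta's formulas give $|p_{n-k}(w)/p_n(w)|\le\binom{n}{k}M^k$, and letting $w\to w_0$ with $p_n(w)\to 0$ would force every $p_{n-k}(w_0)=0$, contradicting $p(\cdot,w_0)\not\equiv 0$. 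Such a sequence eventually has $|z_j|>1$, forcing $(z_j,w_j)\in\mathbb{E}^2$, hence $|w_j|>1$ and $|w_0|\ge 1$, contradicting $|w_0|<1$. The main obstacle is this root-escape argument in the subcase $|w_0|<1$; everything else is coefficient bookkeeping or routine case-checking.
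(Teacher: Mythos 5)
Your proof is correct, but it takes a longer route than the paper, and the detour is instructive to notice. The paper exploits the $\mathbb{T}^2$-symmetry of $p$ once and for all by writing $q$ in the double form
\[
q(z,w) = z^n p\!\left(\tfrac{1}{z},w\right) = w^m\,\overline{p\!\left(\bar z,\tfrac{1}{\bar w}\right)},
\]
the right-hand form being the $w$-analogue of the left. These two expressions together immediately give degree exactly $(n,m)$, and they handle the zero-set claim uniformly: the first form maps a zero $(z_0,w_0)$ of $q$ with $z_0\neq 0$ to a zero of $p$ in $\overline{\mathbb{E}}\times\overline{\mathbb{D}}\setminus\mathbb{T}^2$, while the second maps a zero with $w_0\neq 0$ to one in $\overline{\mathbb{D}}\times\overline{\mathbb{E}}\setminus\mathbb{T}^2$, both forbidden; the single leftover point $(0,0)$ is ruled out because zeros of two-variable polynomials are never isolated. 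Your argument instead verifies the coefficient identity $a_{n,k}=\overline{a_{0,m-k}}$, but only uses it in the form $|p_n|=|p_0|$ on $\mathbb{T}$, so you are left to handle $z_0=0$, $|w_0|<1$ by hand — which you do with the Vieta/root-escape argument. That argument is correct and self-contained, but it is doing work that the identity already gives you for free: $a_{n,k}=\overline{a_{0,m-k}}$ is equivalent to the polynomial identity $p_n(w)=w^m\,\overline{p_0(1/\bar w)}$, valid for all $w\neq 0$, and for $0<|w_0|<1$ this sends a zero of $p_n$ directly to the forbidden point $(0,1/\bar w_0)\in\mathbb{D}\times\mathbb{E}$, with $w_0=0$ covered by non-isolation. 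In short, your proof is sound; the paper's is shorter because it carries the reflection identity globally rather than only on the torus, which eliminates both the root-escape argument and the $|w_0|=1$ subcase.
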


See Section \ref{sec:prelim} for the proof.

It will soon be made apparent why these \tsym\ polynomials are
preferable to polynomials defining distinguished varieties.  We shall
use the following notations
\[
q_z = \frac{\partial q}{\partial z} \text{ and } q_w = \frac{\partial
  q}{\partial w}
\]
\[
Z_q = \{ (z,w) \in \mathbb{C}^2 : q(z,w) = 0\}.
\]

\begin{remark} \label{reflectderivatives} 
When reflecting $q_z$ or
  $q_w$ the reflection is assumed to be performed at the degree that
  would be generically expected.  Namely, if $q$ has degree $(n,m)$,
  then $q_z$ is reflected at the degree $(n-1,m)$:
\[
\widetilde{q_z} (z,w) = z^{n-1} w^{m} \overline{q_z(1/\bar{z}, 1/\bar{w})}
\]
and $q_w$ is reflected at the degree $(n, m-1)$.  In this case, the
reflection of $\widetilde{q_z}$ at the degree $(n,m)$ is $zq_z(z,w)$.
\end{remark}

We mention and prove some identities relating reflection and
differentiation in order to demystify the expressions appearing in the
theorems to follow.

\begin{lemma} \label{refformulas} Let $q \in \mathbb{C}[z,w]$ be a
  polynomial of degree at most $(n,m)$.  Then,
\[
\begin{aligned}
z\frac{\partial \tilde{q}}{\partial z} (z,w) + \widetilde{q_z}(z,w) &= n
\tilde{q}(z,w)\\ w \frac{\partial \tilde{q}}{\partial w} (z,w) +
\widetilde{q_w}(z,w) &= m \tilde{q}(z,w)
\end{aligned}
\]

In particular, if $q$ is $\mathbb{T}^2$-symmetric,
\[
\begin{aligned}
z q_z (z,w) + \widetilde{q_z}(z,w) &= n q(z,w)\\
w q_w (z,w) + \widetilde{q_w}(z,w) &= m q(z,w).
\end{aligned}
\]
\end{lemma}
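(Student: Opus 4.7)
The plan is a direct monomial-level computation. Expand $q(z,w) = \sum_{j,k} a_{jk} z^j w^k$ with the sum running over $0 \le j \le n$, $0 \le k \le m$. The reflection at degree $(n,m)$ is then the explicit expression $\tilde{q}(z,w) = \sum_{j,k} \overline{a_{jk}} z^{n-j} w^{m-k}$. From here, each of the three quantities appearing in the first identity is just a polynomial whose coefficients can be read off immediately.

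Differentiating $\tilde{q}$ in $z$ and multiplying by $z$ gives
\[
z \frac{\partial \tilde{q}}{\partial z}(z,w) = \sum_{j,k} (n-j)\overline{a_{jk}}\, z^{n-j} w^{m-k}.
\]
For $\widetilde{q_z}$ one has to be careful with the convention of Remark \ref{reflectderivatives}: $q_z$ has degree at most $(n-1,m)$ and is reflected at that degree, which introduces the factor $z^{n-1} w^m$ rather than $z^n w^m$. A short calculation from $q_z(z,w) = \sum_{j,k} j\, a_{jk}\, z^{j-1} w^k$ then yields
\[
\widetilde{q_z}(z,w) = \sum_{j,k} j\, \overline{a_{jk}}\, z^{n-j} w^{m-k}.
\]
Adding the two displays, the coefficients $(n-j)$ and $j$ combine to $n$, and the right-hand side is exactly $n\tilde{q}(z,w)$. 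The identity for $w$ follows by the same argument with the roles of $(z,n)$ and $(w,m)$ interchanged; one could also simply apply the first identity to $\tilde{q}(w,z)$ if one wanted to avoid repeating the calculation.

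For the $\mathbb{T}^2$-symmetric case, $q = \tilde q$ gives $\partial\tilde q/\partial z = q_z$ and similarly in $w$, so the two specializations follow by direct substitution into the general identities.

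The only real obstacle is keeping the degree bookkeeping straight, specifically remembering that $\widetilde{q_z}$ is reflected at degree $(n-1,m)$ rather than $(n,m)$; otherwise the identity would be off by a factor of $z$, which is consistent with the parenthetical remark at the end of Remark \ref{reflectderivatives} that reflecting $\widetilde{q_z}$ at the degree $(n,m)$ recovers $z q_z$.
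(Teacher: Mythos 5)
Your computation is correct and is exactly the kind of coefficient-level verification the paper has in mind when it dismisses this as "a calculus exercise." The key bookkeeping point you flag — that $\widetilde{q_z}$ is reflected at degree $(n-1,m)$, not $(n,m)$ — is indeed where the factor of $z$ would otherwise go wrong, and your expansion $\widetilde{q_z}(z,w)=\sum_{j,k} j\,\overline{a_{jk}}\,z^{n-j}w^{m-k}$ combines with $z\,\partial\tilde q/\partial z=\sum_{j,k}(n-j)\overline{a_{jk}}\,z^{n-j}w^{m-k}$ to give $n\tilde q$ as claimed.
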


\begin{proof} This is a calculus exercise.
\end{proof}

\begin{lemma} \label{modlemma} If $q \in \mathbb{C}[z,w]$, of degree
  at most $(n,m)$, is $\mathbb{T}^2$-symmetric, then for all $a,b \in
  \mathbb{R}$,
\[
\begin{aligned}
&(an+bm)^2|q(z,w)|^2 - 2\text{Re}[(azq_z(z,w)+bwq_w(z,w)) (an+bm)
\overline{q(z,w)}] \\
&= |a\widetilde{q_z}(z,w) + b\widetilde{q_w}(z,w)|^2 - |azq_z(z,w) + b
wq_w(z,w)|^2
\end{aligned}
\]
\end{lemma}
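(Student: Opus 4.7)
The plan is to reduce the claim to a direct algebraic manipulation of the two Euler-type identities given in Lemma \ref{refformulas}. Since $q$ is $\mathbb{T}^2$-symmetric, those identities read $zq_z + \widetilde{q_z} = nq$ and $wq_w + \widetilde{q_w} = mq$. Multiplying the first by $a$, the second by $b$, and adding (which is legal because $a,b$ are real scalars, so they pass freely through both the polynomial and the reflected polynomial), one obtains the single linear identity
\[
(azq_z + bwq_w) + (a\widetilde{q_z} + b\widetilde{q_w}) = (an+bm)\, q.
\]

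Next I would introduce abbreviations $A := azq_z(z,w) + bwq_w(z,w)$, $B := a\widetilde{q_z}(z,w) + b\widetilde{q_w}(z,w)$, and $N := an+bm$, so that the identity above becomes simply $A + B = Nq$. Rewriting this as $B = Nq - A$ and taking squared moduli gives
\[
|B|^2 = |Nq|^2 - 2\,\mathrm{Re}(A\cdot \overline{Nq}) + |A|^2 = N^2|q|^2 - 2\,\mathrm{Re}\bigl(A\cdot N\overline{q}\bigr) + |A|^2,
\]
where in the last step we used $N \in \mathbb{R}$. Rearranging yields $|B|^2 - |A|^2 = N^2|q|^2 - 2\,\mathrm{Re}(AN\overline{q})$, which is precisely the identity claimed in the lemma.

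There is no real obstacle here; the whole argument is a polarization of the single quadratic identity $A+B = Nq$. The only point to watch is that reality of $a$ and $b$ is used twice: once so that $aA_1 + bA_2$ and $a\widetilde{A_1} + b\widetilde{A_2}$ combine correctly (since reflection conjugates coefficients, real $a,b$ ensure the sum of reflections equals the reflection of the sum with the same coefficients), and once so that $N = an+bm$ is real and therefore $\overline{Nq} = N\overline{q}$ in the expansion of $|B|^2$.
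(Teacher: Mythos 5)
Your proof is correct and takes essentially the same approach as the paper: both derive the single identity $azq_z + bwq_w + a\widetilde{q_z} + b\widetilde{q_w} = (an+bm)q$ from Lemma~\ref{refformulas} and then finish by expanding the quadratic form. One small inaccuracy in your closing remark: reality of $a,b$ is used only once, namely to ensure $N = an+bm$ satisfies $\overline{N} = N$ (equivalently $|N|^2 = N^2$); the linear combination $a(zq_z + \widetilde{q_z}) + b(wq_w + \widetilde{q_w}) = (an+bm)q$ is simply a scalar combination of two polynomial identities and would hold for any complex $a,b$ -- no reflection-of-a-sum computation occurs in this argument.
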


\begin{proof} By the previous lemma,
\[
azq_z(z,w) + b w q_w(z,w) + a\widetilde{q_z}(z,w) + b \widetilde{q_w}(z,w) =
(an+bm) q(z,w).
\]
Observe now (omitting the arguments $(z,w)$; i.e. replacing $q(z,w)$
with $q$)
\[
\begin{aligned}
&(an+bm)^2|q|^2 - 2\text{Re}[(azq_z+bwq_w) (an+bm)
\bar{q}] \\
=& |azq_z + b w q_w + a\widetilde{q_z} + b
\widetilde{q_w}|^2 \\
&- 2\text{Re}[(azq_z+bwq_w)(\overline{azq_z + b w q_w + a\widetilde{q_z} + b
\widetilde{q_w}})] \\
=& |a\widetilde{q_z} + b\widetilde{q_w}|^2 - |azq_z + b
wq_w|^2  
\end{aligned}
\]
\end{proof}

\begin{theorem} \label{thm:tildenozeros} If $q \in \mathbb{C}[z,w]$ is
  a \tsym\ polynomial with no zeros on $\mathbb{D}^2$
  (resp. $\undist$), then $\widetilde{q_z}$ and $\widetilde{q_w}$ have
  no zeros on $\mathbb{D}^2$ (resp. $\undist$).  In addition, for all
  $a,b >0$,
\[
a\widetilde{q_z} + b \widetilde{q_w}
\]
has no zeros on $\overline{\mathbb{D}^2} \setminus (Z_{\widetilde{q_z}}
\cap Z_{\widetilde{q_w}})$.
\end{theorem}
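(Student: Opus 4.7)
I plan to combine Lemma \ref{refformulas} with a one-variable factorization of $q$. Introduce the notation $f_{a,b} := azq_z + bwq_w$ and $g_{a,b} := a\widetilde{q_z} + b\widetilde{q_w}$; Lemma \ref{refformulas} gives $f_{a,b} + g_{a,b} = (an+bm)q$.

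For the first assertion, I would fix $w_0$ in the appropriate set and factor the one-variable slice $q(\cdot, w_0) = c(w_0)\prod_k(z - \alpha_k(w_0))$. The no-zeros hypothesis forces each $|\alpha_k(w_0)| \ge 1$, and for $z$ in the corresponding set, $|z| < |\alpha_k(w_0)|$. Using $\widetilde{q_z} = nq - zq_z$ (Lemma \ref{refformulas}), a short calculation produces the partial-fraction identity
\[
\frac{\widetilde{q_z}(z, w_0)}{q(z, w_0)} = \sum_k \frac{\alpha_k(w_0)}{\alpha_k(w_0) - z},
\]
each summand of which has real part strictly greater than $1/2$, giving $\text{Re}(\widetilde{q_z}/q) > n/2 > 0$ on a dense open subset. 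Continuity of $\widetilde{q_z}/q$ as a holomorphic function on $\{q \ne 0\}$ extends this to $\text{Re}(\widetilde{q_z}/q) \ge n/2$ throughout the relevant region, also handling the degree-drop case $c(w_0) = 0$; in particular $\widetilde{q_z} \ne 0$ there, and the argument for $\widetilde{q_w}$ is symmetric.

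For the second assertion two further facts are key. On $\mathbb{T}^2$ one has $\widetilde{q_z}(z,w) = z^{n-1}w^m\overline{q_z(z,w)}$ and $\widetilde{q_w}(z,w) = z^n w^{m-1}\overline{q_w(z,w)}$ (using $\bar z = 1/z, \bar w = 1/w$), from which a direct calculation gives $|g_{a,b}| = |f_{a,b}|$ on $\mathbb{T}^2$. At any point of $\mathbb{T}^2 \setminus Z_q$, the complex numbers $f_{a,b}/q$ and $g_{a,b}/q$ then sum to the real number $an+bm$ and have equal moduli, so each has real part exactly $(an+bm)/2 > 0$. Combined with the inequality $\text{Re}(g_{a,b}/q) \ge (an+bm)/2$ on $\undist$ from the first assertion (obtained by adding the inequalities for $\widetilde{q_z}$ and $\widetilde{q_w}$), this gives $g_{a,b} \ne 0$ on $\cbidisk \setminus Z_q$. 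Since $q$ has no zeros on $\undist$, any zero of $g_{a,b}$ in $\cbidisk$ must lie in $Z_q \cap \mathbb{T}^2$, where Lemma \ref{refformulas} (with $q = 0$) gives $\widetilde{q_z} = -z_0 q_z$ and $\widetilde{q_w} = -w_0 q_w$, reducing the vanishing of $g_{a,b}(z_0,w_0)$ to $az_0 q_z(z_0,w_0) + bw_0 q_w(z_0,w_0) = 0$.

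The main obstacle is upgrading this last relation to the joint vanishing $q_z(z_0,w_0) = q_w(z_0,w_0) = 0$, which then yields $\widetilde{q_z}(z_0,w_0) = \widetilde{q_w}(z_0,w_0) = 0$ as required. $\mathbb{T}^2$-symmetry applied locally at the zero $(z_0, w_0)$ already forces the ratio $z_0 q_z(z_0,w_0)/(w_0 q_w(z_0,w_0))$, when defined, to be a real number, but the sign must be pinned down further. To do this I would use that, for $w_0 \in \mathbb{T}$, the one-variable slice $q(\cdot, w_0)$ is, up to a unimodular constant $w_0^m$, $\mathbb{T}$-symmetric of degree $n$ in $z$ with no zeros in $\mathbb{D}$ (because $\mathbb{D}\times\{w_0\} \subset \undist$), hence has all its zeros on $\mathbb{T}$; a classical sign-of-derivative analysis at the boundary zero $z_0 \in \mathbb{T}$ via the real-valued trigonometric polynomial obtained by restricting $q(\cdot, w_0)$ to $\mathbb{T}$ determines the sign of $z_0 q_z(z_0, w_0)$ up to a unimodular factor depending only on $(n, m, z_0, w_0)$. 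The analogous analysis for the slice $q(z_0, \cdot)$ at $w_0$ produces the same unimodular factor, and the matching of signs yields $z_0 q_z/(w_0 q_w) \ge 0$. Together with $a, b > 0$, the relation $az_0 q_z + bw_0 q_w = 0$ then forces both terms to vanish, completing the proof.
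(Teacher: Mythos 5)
Your proof of the first assertion is correct and takes a genuinely different route from the paper. You factor the $z$-slice and use the partial-fraction identity $\widetilde{q_z}/q = \sum_k \alpha_k/(\alpha_k - z)$, each term of which lies in the half-plane $\{\text{Re} > 1/2\}$. The paper instead dilates ($q_t(z,w)=q(tz,w)$), applies the sum-of-squares inequality $|q_t|\geq|\widetilde{q_t}|$ on $\cbidisk$, differentiates at $t=1$, and then converts via Lemma~\ref{modlemma} into $|\widetilde{q_z}|\geq|zq_z|$. These are equivalent at the end (since $\widetilde{q_z}+zq_z=nq$, the inequality $\text{Re}(\widetilde{q_z}/q)\geq n/2$ is the same as $|\widetilde{q_z}|\geq|zq_z|$ off $Z_q$), but your derivation is more elementary, avoiding Theorem~\ref{stablethm} entirely.

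For the second assertion, there is a genuine gap in the ``matching of signs'' step, and it is precisely the crux of the argument. You correctly reduce the problem to points $(z_0,w_0)\in Z_q\cap\mathbb{T}^2$ and to showing that $az_0q_z+bw_0q_w=0$ there forces $q_z=q_w=0$. You also correctly note that the \tsym{} condition makes $z_0q_z(z_0,w_0)$ and $w_0q_w(z_0,w_0)$ real multiples of a common unimodular factor (essentially $z_0^{n/2}w_0^{m/2}/i$). But the one-variable slice analysis you invoke does \emph{not} pin down the \emph{relative} sign of those two real numbers: the restriction of $q(\cdot,w_0)$ to $\mathbb{T}$ is a real trigonometric polynomial with a zero at $z_0$, and likewise for $q(z_0,\cdot)$, but nothing in the slice picture alone tells you those two derivatives have the same sign --- a trigonometric polynomial can cross zero in either direction. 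The sign coherence you need is a two-variable phenomenon: one has to observe that near $(z_0,w_0)$ the zero set of $q$ is a holomorphic curve that swaps the $\mathbb{D}$ and $\mathbb{E}$ sides of $\mathbb{T}$ in each coordinate (since $Z_q\cap\cbidisk\subset\mathbb{T}^2$ and $Z_q$ is symmetric under $(z,w)\mapsto(1/\bar z,1/\bar w)$), hence is orientation-reversing on the boundary circle, which gives $\psi'(z_0)=-q_z/q_w$ the sign $z_0\psi'(z_0)/w_0<0$, i.e.\ $z_0q_z/(w_0q_w)\geq 0$. Without that ingredient your argument does not close. The paper avoids this entirely: the exact identity in Lemma~\ref{modlemma}, applied three times, yields the inequality \eqref{nextobserve} valid on all of $\cbidisk$, and a vanishing-order comparison at boundary points then shows that a zero of $a\widetilde{q_z}+b\widetilde{q_w}$ forces both $\widetilde{q_z}$ and $\widetilde{q_w}$ to vanish. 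A second, minor, point: the theorem asserts the second conclusion under the weaker hypothesis that $q$ has no zeros on $\mathbb{D}^2$ only, whereas your reduction to $Z_q\cap\mathbb{T}^2$ relies on $q$ having no zeros on $\undist$; the paper's argument needs no such upgrade.
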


See Section \ref{sossec}.

The following corollary (proved in Section \ref{sossec}) is curious
because it can be iterated.

\begin{corollary} \label{symcor4} If $q \in \mathbb{C}[z,w]$ is an
  \etsym\ polynomial of degree $(n,m)$ with no zeros on $\mathbb{D}^2$
  (or $\undist$), then so is
\[
mn q(z,w) - mzq_z(z,w) - n wq_w(z,w).
\]

If $p$ is a polynomial of degree $(n,m)$ defining a distinguished
variety, then so is
\[
mzp_z(z,w) - n w p_w(z,w).
\]
\end{corollary}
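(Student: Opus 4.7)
The plan is to establish the first assertion about $r := mnq - mzq_z - nwq_w$ and then deduce the second via Lemma~\ref{corrlemma}. After absorbing a unimodular constant, assume $q = \tilde q$. To see $r$ is \etsym, reflect termwise at bidegree $(n,m)$: by Remark~\ref{reflectderivatives}, the reflections of $zq_z$ and $wq_w$ are $\widetilde{q_z}$ and $\widetilde{q_w}$, so $\tilde r = mnq - m\widetilde{q_z} - n\widetilde{q_w}$; substituting $\widetilde{q_z} = nq - zq_z$ and $\widetilde{q_w} = mq - wq_w$ from Lemma~\ref{refformulas} yields $\tilde r = -r$.

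The main step is to show $r$ has no zeros on $\mathbb{D}^2$ (resp.\ $\undist$). Using Lemma~\ref{refformulas} once more, rewrite $r = m\widetilde{q_z} + n\widetilde{q_w} - mn q$; Theorem~\ref{thm:tildenozeros} then guarantees that $A := m\widetilde{q_z} + n\widetilde{q_w}$ has no zeros on $\mathbb{D}^2$ (resp.\ $\undist$). Applying Lemma~\ref{modlemma} with $(a,b) = (m,n)$ gives the key identity
\[ 4mn\,\operatorname{Re}[r\bar q] \;=\; |m\widetilde{q_z} + n\widetilde{q_w}|^2 - |mzq_z + nwq_w|^2 \;=\; |A|^2 - |\tilde A|^2, \]
where $\tilde A$ is the reflection of $A$ at bidegree $(n,m)$. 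The hard step is to establish strict positivity of $|A|^2 - |\tilde A|^2$ at each point of $\mathbb{D}^2$ (resp.\ at each interior point of $\undist$ where $q \ne 0$); the intended route is the sums-of-squares decomposition underlying the proof of Theorem~\ref{thm:tildenozeros} (most likely writing this difference as $(1-|z|^2)P + (1-|w|^2)Q$ for nonnegative polynomials $P,Q$) which yields the required positivity from the hypothesis on $q$. Combined with $q \ne 0$, strict positivity of $\operatorname{Re}[r\bar q]$ forces $r \ne 0$.

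For the second assertion, set $q(z,w) := z^n p(1/z, w)$, which by Lemma~\ref{corrlemma} is \tsym\ of bidegree $(n,m)$ with no zeros on $\undist$; the first assertion then produces a compatible $r$. A chain-rule computation gives $z^n r(1/z, w) = mz p_z(z,w) - nw p_w(z,w)$. That the right-hand side defines a distinguished variety follows by case analysis: for $z \ne 0$, zeros come from zeros of $r$, whose essential $\mathbb{T}^2$-symmetry together with nonvanishing on $\undist$ restricts them to $\mathbb{T}^2 \cup (\mathbb{D}\times\mathbb{E}) \cup (\mathbb{E}\times\mathbb{D})$, which map under $(u,w) \mapsto (1/u, w)$ into $\mathbb{T}^2 \cup \mathbb{E}^2 \cup \mathbb{D}^2$; for $z = 0$, the top $z$-coefficient of $r$ works out to $-nw(q_{n,\cdot})'(w)$, and since $q_{n,\cdot}(w) = p(0,w)$ has its zeros in $\mathbb{D}$ (as $p$ defines a distinguished variety), Gauss--Lucas places the zeros of $(q_{n,\cdot})'$ in $\mathbb{D}$ as well. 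The main obstacle throughout is the strict positivity claim in the second paragraph, which rests on the SOS machinery developed alongside Theorem~\ref{thm:tildenozeros}.
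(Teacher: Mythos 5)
Your overall strategy matches the paper's: verify that $r := mnq - mzq_z - nwq_w$ is \etsym\ via Lemma \ref{refformulas}, show $\operatorname{Re}(r\bar q)>0$ on the relevant region so $r$ cannot vanish there, and then pull the conclusion over to $p$ via $q(z,w)=z^n p(1/z,w)$.

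However, the central positivity step is left as a gap, and the route you sketch would not close it. You want $|A|^2-|\tilde A|^2>0$ with $A=m\widetilde{q_z}+n\widetilde{q_w}$ and propose extracting this from ``writing the difference as $(1-|z|^2)P+(1-|w|^2)Q$ with $P,Q$ nonnegative.'' But nonnegativity of $P,Q$ only gives $\geq 0$; applying Theorem \ref{stablethm} to $A$ produces an SOS identity whose vector polynomials are abstract, and nothing forces them to be nonvanishing where you need them. What the paper actually uses is the displayed chain \eqref{nextobserve} from the proof of Theorem \ref{thm:tildenozeros}, which for $a,b>0$ gives the explicit lower bound
\[
|a\widetilde{q_z}+b\widetilde{q_w}|^2 - |azq_z+bwq_w|^2 \geq (an+bm)\bigl[(a/n)(1-|z|^2)|\widetilde{q_z}|^2 + (b/m)(1-|w|^2)|\widetilde{q_w}|^2\bigr],
\]
and strict positivity on $\mathbb{D}^2$ (resp.\ on $\undist$) then comes from the already-established fact that $\widetilde{q_z}$ and $\widetilde{q_w}$ individually have no zeros there, not merely that $A$ does. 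You have this fact available but never invoke it in the positivity argument, so the ``main obstacle'' you flag is genuinely unresolved as written. Setting $(a,b)=(m,n)$ in \eqref{nextobserve} is exactly the paper's one-line proof.

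On the second assertion, your elaboration goes further than the paper (which just says ``convert back''); the identity $z^n r(1/z,w)=mzp_z-nwp_w$ is correct and the case analysis for $z\neq 0$ is sound. But the $z=0$ case relies on Gauss--Lucas applied to $p(0,\cdot)$, which silently assumes $p(0,\cdot)$ is nonconstant; if it were a nonzero constant then $p_w(0,\cdot)\equiv 0$ and $z$ would divide $mzp_z-nwp_w$, breaking the claim. This can be ruled out: \tsym ry of $q$ forces $q(0,w)$ to be a constant times $w^m$, which would vanish at $(0,0)\in\undist$, contradicting Lemma \ref{corrlemma}. Adding that observation would complete your $z=0$ argument. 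Also, a small imprecision: your containment for $Z_r$ holds only off the coordinate axes (reflection $(z,w)\mapsto(1/\bar z,1/\bar w)$ is undefined there), though the axis cases land harmlessly in $\mathbb{D}^2$ after the substitution.
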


Theorem \ref{thm:tildenozeros} allows us to use the following sum of
squares formula for polynomials with \emph{no zeros on the bidisk}
proved in Cole-Wermer \cite{CW99}.  As is articulated in \cite{CW99},
this formula is equivalent to And\^{o}'s inequality in operator theory
and Agler's Pick interpolation theorem on the bidisk.

\begin{theorem}[Cole-Wermer \cite{CW99}] \label{stablethm} Let $q \in
  \mathbb{C}[z,w]$ be a polynomial of degree at most $(n,m)$ with no
  zeros on the bidisk $\mathbb{D}^2$.  Then, there exists a vector
  polynomial
\[
\vec{A}(z,w) = (A_1(z,w), \dots, A_n(z,w))^t \in \mathbb{C}^n
\]
of degree at most $(n-1,m)$ (meaning each component is a polynomial of
degree at most $(n-1,m)$) and a vector polynomial
\[
\vec{B}(z,w) = (B_1(z,w), \dots, B_m(z,w))^t \in \mathbb{C}^m
\]
of degree at most $(n,m-1)$ such that

\begin{align}
  &q(z,w) \overline{q(Z,W)}  - \tilde{q}(z,w) \overline{\tilde{q}(Z,W)} 
  = \label{sosstable}\\
  &(1-z \bar{Z}) \ip{\vec{A}(z,w)}{\vec{A}(Z,W)} +
  (1-w \bar{W}) \ip{\vec{B}(z,w)}{\vec{B}(Z,W)} \nonumber
\end{align}

for $(z,w), (Z,W) \in \mathbb{C}^2$.  

\end{theorem}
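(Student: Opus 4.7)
The plan is to deduce this from Agler's theorem on the bidisk (equivalently, And\^{o}'s dilation theorem). The key observation is that the ratio
\[
f(z,w) := \frac{\tilde{q}(z,w)}{q(z,w)}
\]
is a rational function, holomorphic on an open neighborhood of $\overline{\mathbb{D}^2}$ because $q$ has no zeros there, and $|f(z,w)|=1$ on $\mathbb{T}^2$ because $|q|=|\tilde{q}|$ on the torus. By the maximum modulus principle applied slice-wise (or on the bidisk directly), $\|f\|_{H^\infty(\mathbb{D}^2)}\le 1$, so $f$ belongs to the Schur class on the bidisk.

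Next I would invoke Agler's theorem: every Schur-class function on $\mathbb{D}^2$ admits an Agler decomposition, i.e.\ there exist positive sesquianalytic kernels $K_1,K_2$ on $\mathbb{D}^2\times\mathbb{D}^2$ with
\[
1 - f(z,w)\overline{f(Z,W)} = (1-z\bar Z)\,K_1((z,w),(Z,W)) + (1-w\bar W)\,K_2((z,w),(Z,W)).
\]
Multiplying both sides by $q(z,w)\overline{q(Z,W)}$ and using $\tilde{q}=fq$ yields
\[
q(z,w)\overline{q(Z,W)} - \tilde{q}(z,w)\overline{\tilde{q}(Z,W)} = (1-z\bar Z)\,L_1 + (1-w\bar W)\,L_2,
\]
with $L_i := q(z,w)\overline{q(Z,W)}K_i$ still positive sesquianalytic kernels. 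Applying the Kolmogorov factorization theorem to $L_1$ and $L_2$ would express them as inner products $\langle \vec{A}(z,w),\vec{A}(Z,W)\rangle$ and $\langle \vec{B}(z,w),\vec{B}(Z,W)\rangle$ in some auxiliary Hilbert spaces.

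The main obstacle is the final step: showing that $\vec{A},\vec{B}$ can actually be taken to be \emph{polynomial vectors} of bidegrees $(n-1,m)$ and $(n,m-1)$ respectively. A priori, Agler's theorem only produces abstract positive kernels, whose Kolmogorov factorization lives in an infinite-dimensional Hilbert space. To extract the polynomial structure and the precise degree bounds, I would compare coefficients in the expansion of both sides of the identity. The left-hand side is a Hermitian polynomial of bidegree at most $(n,m)$ separately in $(z,w)$ and $(Z,W)$, and the factors $(1-z\bar{Z})$ and $(1-w\bar{W})$ contribute one degree in $z,Z$ and $w,W$ respectively. A careful coefficient-matrix or Schur-complement argument, together with an approximation step that first perturbs $q$ to have no zeros on $\overline{\mathbb{D}^2}$ if needed, should force $L_1,L_2$ to have finite rank with kernel vectors of exactly the stated bidegrees. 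Once the polynomial form is secured on $\mathbb{D}^2\times\mathbb{D}^2$, the identity extends to all of $\mathbb{C}^2\times\mathbb{C}^2$ by polynomial identity.
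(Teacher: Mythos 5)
Your opening move is sound: $f=\tilde q/q$ is indeed in the Schur class of $\mathbb{D}^2$ (the claim that $f$ is holomorphic on a neighborhood of $\cbidisk$ is an overclaim, since $q$ is only assumed nonvanishing on the \emph{open} bidisk and can vanish on $\partial(\mathbb{D}^2)$, but the bound $\|f\|_{H^\infty(\mathbb{D}^2)}\le 1$ is still correct, e.g.\ by applying the argument to $q(rz,rw)$ and letting $r\nearrow 1$). Agler's theorem then gives the abstract kernel decomposition, and clearing denominators by multiplying with $q(z,w)\overline{q(Z,W)}$ is legitimate.

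The gap is in the final paragraph, and it is not a small one: the polynomial structure and the bidegree bounds on $\vec A$ and $\vec B$ are not consequences of the Agler decomposition plus a coefficient comparison --- they \emph{are} the content of the theorem. Agler's theorem produces only abstract positive sesquianalytic kernels $K_1,K_2$. After multiplication the identity constrains only the combination $(1-z\bar Z)L_1+(1-w\bar W)L_2$ to be a polynomial of bidegree $(n,m)$ in each variable; it does not by itself force $L_1,L_2$ individually to be finite-rank polynomial kernels, since one can shift non-polynomial pieces between the two summands without disturbing the sum or positivity. Producing a decomposition in which the Kolmogorov factors are polynomials of bidegrees $(n-1,m)$ and $(n,m-1)$ requires a genuinely new argument --- an a~priori restriction to a finite-dimensional polynomial subspace, a Fej\'er--Riesz/Schur-complement construction, or a compactness argument applied to suitably truncated kernels --- and the proposal gestures at this (``should force'') without supplying it. As written the argument is circular at exactly the step that matters.

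For contrast, the paper cites \cite{CW99} for Theorem~\ref{stablethm} rather than reproving it, and in the Appendix proves the refinement Theorem~\ref{GWthm} by a route that avoids this obstacle entirely: working from the start in the Hilbert space $L^2(\rho)$ with $d\rho = c^2|q(z,w)|^{-2}\,\tfrac{dz}{2\pi i}\tfrac{dw}{2\pi i}$ on $\mathbb{T}^2$, taking orthogonal complements of explicitly finite-dimensional polynomial subspaces, and identifying $\vec A$, $\vec B$ as orthonormal bases of those subspaces. There the polynomial structure and the degree bounds are built in by construction, so the extraction problem that stalls your proposal never arises.
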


Here $\ip{\vec{A}(z,w)}{\vec{A}(Z,W)}=\sum_j A_j(z,w)
\overline{A_j(Z,W)}$ represents the inner product of the two vectors
in $\mathbb{C}^n$, $\vec{A}(z,w)$ and $\vec{A}(Z,W)$; we emphasize
that this is not any type of Hilbert function space inner product
(likewise for $\vec{B}(z,w)$ and $\vec{B}(Z,W)$, elements of
$\mathbb{C}^m$).

More can be said when $q$ in the above theorem has no zeros on the
closed bidisk $\cbidisk$.  Theorem \ref{GWthm} below is properly
attributed to Geronimo-Woerdeman \cite{GW04} (see Proposition 2.3.3),
although we can more readily explain how this follows from the work in
Knese \cite{gK08} and this is done in the Appendix.

\begin{theorem}[Geronimo-Woerdeman] \label{GWthm} If the polynomial
  $q$ in Theorem \ref{stablethm} has no zeros on $\cbidisk$, then
  $\vec{A}$ and $\vec{B}$ may be chosen so that when we write
\[
\begin{aligned}
\vec{A}(z,w) &= A(w) \begin{pmatrix} 1 \\ z \\ \vdots \\
  z^{n-1} \end{pmatrix} \\
\vec{B}(z,w) &= B(z)  \begin{pmatrix} 1 \\ w \\ \vdots \\
  w^{m-1} \end{pmatrix}
\end{aligned}
\]
where $A(w)$ is an $n\times n$ matrix polynomial in $w$ of degree at
most $m$ in each entry and $B(z)$ is an $m\times m$ matrix
polynomial in $z$ of degree $n$ in each entry, we have
\[
\begin{aligned}
A(w) &\text{ is invertible for all } w \in \overline{\mathbb{D}}
\text{ and}\\
z^n\overline{B(1/\bar{z})} &\text{ is invertible for all }
 z \in \overline{\mathbb{D}}.
\end{aligned}
\]
\end{theorem}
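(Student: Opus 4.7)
The proof decomposes into a routine ``matrix form'' part and a nontrivial ``invertibility'' part. For the matrix form, the observation is purely notational: since each component $A_j(z,w)$ has $z$-degree at most $n-1$, collecting coefficients
\[
A_j(z,w) = \sum_{i=0}^{n-1} a_{ji}(w) z^i, \quad a_{ji} \in \mathbb{C}[w], \ \deg_w a_{ji} \le m,
\]
produces an $n \times n$ matrix polynomial $A(w) = [a_{ji}(w)]$ with $\vec{A}(z,w) = A(w) Y_n(z)$, where $Y_n(z) = (1, z, \ldots, z^{n-1})^t$. The analogous expression for $\vec{B}$ is immediate from the same grouping of coefficients in $w$.

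The substantive content is that $\vec{A}, \vec{B}$ can be \emph{chosen} so that $A(w)$ and $\tilde{B}(z) := z^n \overline{B(1/\bar z)}$ are invertible on all of $\overline{\mathbb{D}}$. A generic Cole--Wermer decomposition need not produce this, so the plan is to reconstruct the SOS via the Hilbert-space framework of Knese \cite{gK08}. The natural starting point is the kernel
\[
K((z,w),(Z,W)) = \frac{q(z,w)\overline{q(Z,W)} - \tilde{q}(z,w)\overline{\tilde{q}(Z,W)}}{(1-z\bar Z)(1-w\bar W)},
\]
which is positive and generates a finite-dimensional reproducing-kernel Hilbert space $\mathcal{H}$ precisely because $q$ has no zeros on $\cbidisk$. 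The two terms of the desired SOS will appear as the reproducing kernels of two orthogonally complementary subspaces $\mathcal{H}_1, \mathcal{H}_2 \subset \mathcal{H}$ of dimensions $n(m+1)$ and $(n+1)m$, characterized by compatibility with multiplication by $z$ and $w$ respectively.

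Once this decomposition is in hand, identifying the reproducing kernel of $\mathcal{H}_1$ in the basis $\{1, z, \ldots, z^{n-1}\}$ for the ``$z$-direction'' should yield the factored form $(1-z\bar Z)^{-1} Y_n(z)^t A(w)^t \overline{A(W) Y_n(Z)}$, determining $A(w)$ uniquely up to a constant unitary action on $\mathbb{C}^n$. Invertibility of $A(w_0)$ for $w_0 \in \overline{\mathbb{D}}$ should then follow from a dimension count: if $A(w_0) v = 0$ for a nonzero $v$, the associated evaluation map on $\mathcal{H}_1$ drops rank at $w_0$, which combined with the $\mathbb{C}[w]$-module structure of $\mathcal{H}_1$ cuts out a proper submodule, contradicting the dimension $n(m+1)$. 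The symmetric invertibility of $\tilde{B}(z)$ follows by swapping the roles of $z$ and $w$ throughout.

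The main obstacle I anticipate is verifying that the two subspaces $\mathcal{H}_1, \mathcal{H}_2$ really have the predicted dimensions and that their reproducing kernels genuinely take the advertised rank-$n$ and rank-$m$ matrix-polynomial forms with full-rank evaluation at each $w_0 \in \overline{\mathbb{D}}$ and $z_0 \in \overline{\mathbb{D}}$. This surjectivity of the evaluation maps is exactly the step that would fail if $q$ had even a single zero on $\mathbb{T}^2$ or elsewhere in $\cbidisk$, and isolating this ``no-zeros-on-$\cbidisk$'' hypothesis as the precise hypothesis needed for the refinement should be the crux of the proof.
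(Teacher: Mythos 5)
Your plan is directionally right—the matrix form is indeed a notational triviality, and the substance of the theorem really does live in the Hilbert-space machinery of \cite{gK08}—but the specific kernel you propose to start from is the wrong object, the dimensions you attach to $\mathcal{H}_1,\mathcal{H}_2$ are wrong, and the mechanism you offer for invertibility misses the actual argument.

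First, the kernel
\[
K((z,w),(Z,W)) = \frac{q(z,w)\overline{q(Z,W)} - \tilde{q}(z,w)\overline{\tilde{q}(Z,W)}}{(1-z\bar Z)(1-w\bar W)}
\]
does \emph{not} generate a finite-dimensional reproducing kernel Hilbert space. Substituting the Cole--Wermer decomposition into the numerator gives
\[
K = \frac{\ip{\vec{A}(z,w)}{\vec{A}(Z,W)}}{1-w\bar W} + \frac{\ip{\vec{B}(z,w)}{\vec{B}(Z,W)}}{1-z\bar Z},
\]
and the Szeg\H{o} factors $1/(1-w\bar W)$ and $1/(1-z\bar Z)$ each have infinite rank, regardless of whether $q$ vanishes on $\cbidisk$. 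The space the paper actually works in is $L^2(\rho)$ with $d\rho = c^2|q|^{-2}\,\tfrac{dz}{2\pi i}\tfrac{dw}{2\pi i}$ on $\mathbb{T}^2$, and the relevant subspaces are $S_1 = \{\deg \le (n-1,m)\}\ominus_\rho w\{\deg\le(n-1,m-1)\}$ (dimension $n$, not $n(m+1)$) and its analogue $\widetilde{S_2}$ (dimension $m$). Your dimension count is off by exactly the codimension of the $w$-shifted subspace you forgot to subtract.

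Second, the ``$\mathbb{C}[w]$-module'' reasoning for invertibility cannot work as stated: $S_1$ is an orthogonal complement of $w\cdot(\text{something})$ inside a polynomial truncation, so it is emphatically \emph{not} closed under multiplication by $w$, and there is no module to cut a submodule out of. The actual engine is that multiplication by $w$ is an \emph{isometry} on $L^2(\rho)$ (because $|w|=1$ on $\mathbb{T}^2$). If $A(w_0)v=0$ for some nonzero $v$ and $w_0\in\overline{\mathbb{D}}$, then $p:=v^t\vec{A}\in S_1$ factors as $p=(w-w_0)r$ with $r$ of degree at most $(n-1,m-1)$, whence $wr$ lies in the space $S_1$ is orthogonal to. Then $|w_0|^2\|r\|^2 = \|p-wr\|^2 = \|p\|^2 + \|r\|^2$, forcing $\|p\|^2 = (|w_0|^2-1)\|r\|^2 \le 0$, a contradiction. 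This is where the hypothesis ``no zeros on $\cbidisk$'' enters: it is what makes $\rho$ a bona fide finite measure so that the $L^2(\rho)$ inner product exists at all. Your instinct that the no-zeros hypothesis is the crux is correct, but the place it bites is the existence of $\rho$, not a putative finite-dimensionality of the $K$-space.

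So: right neighborhood, wrong kernel, wrong dimensions, and the isometry step—the one idea that actually forces invertibility on the closed disk—is absent.
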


There is an explanation for the apparent asymmetry in this theorem,
which would take us too far afield to detail.  

\begin{remark} In a future article, we will extend this theorem to the
  case where $q$ has no zeros on $\undist$ and finitely many zeros on
  $\mathbb{T}^2$, with the modified conclusion that $A(w)$ as above is
  invertible for $w \in \overline{\mathbb{D}}$ except possibly at
  values of $w \in \mathbb{T}$ for which there exists $z \in
  \mathbb{T}$ such that $q(z,w) = 0$ (and similarly for $B(z)$
  above).  
\end{remark}

Here is our sum of squares formula for \tsym\ polynomials with no
zeros on the bidisk.  It is proved in Section \ref{sossec}.

\begin{theorem} \label{symstablethm} Suppose $q \in \mathbb{C}[z,w]$
  is $\mathbb{T}^2$-symmetric polynomial of degree $(n,m)$ with no
  zeros on $\mathbb{D}^2$ and let $a, b \geq 0$, not both zero.  Then,
  there exists a $\mathbb{C}^n$-valued polynomial $\vec{A}$ of degree
  at most $(n-1,m)$ and a $\mathbb{C}^m$-valued polynomial $\vec{B}$
  of degree at most $(n,m-1)$ such that
\[
\begin{aligned}
  (an+bm)|q(z,w)|^2 &- 2\text{Re}( (azq_z(z,w)+bwq_w(z,w)) \overline{q(z,w)})\\
  &= (1-|z|^2) |\vec{A}(z,w)|^2 + (1-|w|^2) |\vec{B}(z,w)|^2
\end{aligned}
\]
Furthermore, if $q$ is a product of distinct irreducible factors, then
$\vec{A}$ and $\vec{B}$ have at most finitely many common
zeros on $Z_q$.  
\end{theorem}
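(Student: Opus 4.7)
The plan is to apply Cole-Wermer (Theorem \ref{stablethm}) to the auxiliary polynomial $P(z,w) := a\widetilde{q_z}(z,w) + b\widetilde{q_w}(z,w)$. First I would check that $P$ has degree at most $(n,m)$ (immediate from the degrees of $\widetilde{q_z}$ and $\widetilde{q_w}$) and has no zeros on $\mathbb{D}^2$: Theorem \ref{thm:tildenozeros} gives this directly when $a,b>0$, using that $Z_{\widetilde{q_z}}\cap Z_{\widetilde{q_w}}$ is disjoint from $\mathbb{D}^2$ (since $\widetilde{q_z}$ and $\widetilde{q_w}$ do not vanish there), while the degenerate cases $a=0$ or $b=0$ reduce to the first half of the same theorem.

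Next I would compute the reflection $\tilde P$ of $P$ at degree $(n,m)$: by Remark \ref{reflectderivatives}, the reflection of $\widetilde{q_z}$ at $(n,m)$ is $zq_z$ and that of $\widetilde{q_w}$ is $wq_w$, so $\tilde P = azq_z + bwq_w$. Applying Cole-Wermer to $P$ and setting $(Z,W)=(z,w)$ yields
\[
|P|^2 - |\tilde P|^2 = (1-|z|^2)|\vec A|^2 + (1-|w|^2)|\vec B|^2
\]
for some $\vec A$ of degree at most $(n-1,m)$ and $\vec B$ of degree at most $(n,m-1)$. Lemma \ref{modlemma} identifies the left side as $(an+bm)\bigl[(an+bm)|q|^2 - 2\,\text{Re}((azq_z+bwq_w)\bar q)\bigr]$, so after rescaling $\vec A, \vec B$ by $(an+bm)^{-1/2}$ we obtain the sum-of-squares identity.

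For the common-zeros claim, I would use the polarized form of the Cole-Wermer identity. If $(z_0,w_0)\in Z_q$ satisfies $\vec A(z_0,w_0)=\vec B(z_0,w_0)=0$, then setting $(z,w)=(z_0,w_0)$ in the polarization kills the right side; using $P+\tilde P=(an+bm)q$ from Lemma \ref{refformulas} to deduce $\tilde P(z_0,w_0)=-P(z_0,w_0)$, what remains is $P(z_0,w_0)(an+bm)\overline{q(Z,W)}=0$ for all $(Z,W)$, forcing $P(z_0,w_0)=0$. Hence the common zeros lie in $Z_q\cap Z_P$.

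The main obstacle is to show $Z_q\cap Z_P$ is finite, equivalently, that $q$ and $P$ share no common irreducible factor. If an irreducible $f$ divides both, writing $q=fg$ with $\gcd(f,g)=1$ (using distinct irreducibles) and reducing modulo $f$ via Lemma \ref{refformulas} gives $f\mid azf_z+bwf_w$; a degree comparison forces $azf_z+bwf_w=\lambda f$ for some constant $\lambda$, so $f$ is weighted homogeneous with weights $(a,b)$. For $a,b>0$ the flow $(z,w)\mapsto(e^{as}z,e^{bs}w)$ preserves $Z_f$ and shrinks any point with $zw\neq 0$ into $\mathbb{D}^2$, so $Z_f\subseteq\{z=0\}\cup\{w=0\}$ and hence $f\in\{z,w\}$ up to constants---this contradicts $q$ being \tsym\ of exact degree $(n,m)$. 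The degenerate cases $a=0$ or $b=0$ need a separate analysis restricted to candidate factors $z-\alpha$ or $w-\beta$ with unimodular $\alpha,\beta$, handled directly using the formula on the slice $\{z=\alpha\}$ (or $\{w=\beta\}$).
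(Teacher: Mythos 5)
Your derivation of the sum-of-squares identity is correct and is essentially the paper's approach: apply Cole--Wermer (Theorem \ref{stablethm}) to $P = a\widetilde{q_z}+b\widetilde{q_w}$ (whose nonvanishing on $\mathbb{D}^2$ comes from Theorem \ref{thm:tildenozeros}), identify $\tilde P = azq_z+bwq_w$, and unwind with Lemma \ref{modlemma}.

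For the finitely-many-common-zeros claim your route is a genuine variant. The paper assumes a common irreducible factor $f$ of $\vec A$, $\vec B$, and $q$, then substitutes $(Z,W)=(0,0)$ into the polarized identity to conclude $f\mid azq_z+bwq_w$. You instead first show that a common zero of $\vec A,\vec B$ lying on $Z_q$ must lie on $Z_P$ (via the polarized identity at that point and $P+\tilde P=(an+bm)q$), and then try to show $Z_q\cap Z_P$ is finite. Both roads lead to $f\mid azf_z+bwf_w$, and from there your weighted-homogeneity/flow argument is correct for $a,b>0$ though more elaborate than necessary: since $azf_z+bwf_w$ has no constant term and $f(0,0)\neq 0$, the constant $\lambda$ is forced to be $0$, and then for $a,b>0$ comparing coefficients ($aj+bk>0$ for $(j,k)\neq(0,0)$) forces $f$ constant, a contradiction.

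The genuine gap is the degenerate case $a=0$ or $b=0$, which you defer with a one-sentence sketch. That deferral cannot be patched within your framework: when $a=0$ and $q$ has a factor $z-\alpha$ with $|\alpha|=1$ (allowed, since $q$ need only be zero-free on the open bidisk), then $z-\alpha$ divides $P=b\widetilde{q_w}$ as well, so $Z_q\cap Z_P$ contains the whole line $\{z=\alpha\}$ and your reduction ``common zeros lie in $Z_q\cap Z_P$, which is finite'' fails at the second step. Worse, for $q(z,w)=(z-1)(w-1)$ with $a=0,b=1$ one has $|P|^2-|\tilde P|^2=(1-|w|^2)|z-1|^2$, which forces $\vec A\equiv 0$, so $\vec A$ and $\vec B$ genuinely have infinitely many common zeros on $Z_q$. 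You should either supply a real argument for $a=0$ or $b=0$ or (more plausibly) restrict the finiteness claim to $a,b>0$, which is all that the downstream results (Theorems \ref{sosdist}, \ref{refinerep}, \ref{extendthm}) actually require.
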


Here $|\vec{A}(z,w)|$ is the length of the vector $\vec{A}(z,w)$; we
emphasize that it is not any type of function space norm.

\begin{remark}
  Formulas of the above type can always be \emph{polarized} to give a
  formula (as in Theorem \ref{stablethm}) that is holomorphic in
  $(z,w)$ and anti-holomorphic in $(Z,W)$.  This is the polarization
  theorem for holomorphic function which (loosely) says that if
  $H(z,\bar{z}) \equiv 0$ then $H(z,Z) \equiv 0$ for any holomorphic
  function $H$ of two variables.
\end{remark}

Using the correspondence between distinguished
varieties and $\mathbb{T}^2$-symmetric polynomials with no zeros on
$\undist$, we can prove the following sum of squares formula for
polynomials defining distinguished varieties. See Section
\ref{sossec}.

\begin{theorem} \label{sosdist} Let $V$ be a distinguished variety
  given as the zero set of a polynomial $p \in \mathbb{C}[z,w]$ of
  degree $(n,m)$. Let $a,b\geq 0$, not both zero.  Then, there exists
  a $\mathbb{C}^n$-valued vector polynomial $\vec{P}$ of degree at
  most $(n-1,m)$ and a $\mathbb{C}^m$-valued vector polynomial
  $\vec{Q}$ of degree at most $(n,m-1)$ such that
\[
\begin{aligned}
  (bm-an) |p(z,w)|^2 &+ 2\text{Re} [(a zp_z(z, w)-b w
    p_w(z,w))\overline{p(z,w)}]\\ & +
  (1-|z|^2) |\vec{P}(z,w)|^2 \\ =&
  (1-|w|^2) |\vec{Q}(z,w)|^2.
\end{aligned}
\]
If $p$ is a product of distinct irreducible factors, then none of the
entries of $\vec{P}$ or $\vec{Q}$ can vanish identically on $V$.  

Moreover, for $(z,w), (Z,W) \in V$
\begin{equation} \label{sosonV}
(1-z\bar{Z}) \ip{\vec{P}(z,w)}{\vec{P}(Z, W)} =
  (1-w\bar{W}) \ip{\vec{Q}(z,w)}{\vec{Q}(Z,W)}
\end{equation}
\end{theorem}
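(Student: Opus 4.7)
The plan is to reduce to Theorem \ref{symstablethm} via the correspondence in Lemma \ref{corrlemma}. Define $q(z,w) = z^n p(1/z, w)$; by that lemma, $q$ is $\mathbb{T}^2$-symmetric of degree $(n, m)$ with no zeros on $\undist$, in particular no zeros on $\mathbb{D}^2$. Applying Theorem \ref{symstablethm} to $q$ with the given $(a, b)$ yields vector polynomials $\vec A \in \mathbb{C}^n$ of degree at most $(n-1, m)$ and $\vec B \in \mathbb{C}^m$ of degree at most $(n, m-1)$ realizing a sum of squares identity for $q$. The remainder of the proof is then a translation of this identity from $q$ to $p$.

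First polarize the $q$-identity (replacing each $\bar z, \bar w$ with independent antiholomorphic variables $\bar Z, \bar W$) and use Lemma \ref{refformulas} to rewrite its left-hand side in the cleaner form $(a\widetilde{q_z}(z,w) + b\widetilde{q_w}(z,w))\overline{q(Z,W)} - q(z,w)\overline{aZq_z(Z,W) + bWq_w(Z,W)}$. Then substitute $z \to 1/z$ and $Z \to 1/Z$ and multiply through by $z^n \bar Z^n$ to clear denominators. The key translation identities, all immediate from Lemma \ref{refformulas}, are
\[
z^n q(1/z, w) = p(z, w), \quad z^{n-1} \widetilde{q_z}(1/z, w) = p_z(z, w), \quad z^n \widetilde{q_w}(1/z, w) = \widetilde{p_w}(z, w),
\]
together with analogous formulas for $(zq_z)(1/z, w)$ and $(wq_w)(1/z, w)$. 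Defining $\vec P(z, w) := z^{n-1}\vec A(1/z, w)$ and $\vec Q(z, w) := z^n \vec B(1/z, w)$ gives polynomial vectors of the required degrees $(n-1, m)$ and $(n, m-1)$. Substituting and simplifying using $\widetilde{p_w} = mp - wp_w$ yields a polarized identity in $p, p_z, p_w, \vec P, \vec Q$; depolarizing by setting $(Z,W)=(z,w)$ gives exactly the main identity of Theorem \ref{sosdist}.

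For \eqref{sosonV}: the polarized left-hand side, after substitution, organizes naturally as $\overline{p(Z,W)}\cdot(\cdots) + p(z,w)\cdot(\cdots)$, so both terms vanish when $(z,w),(Z,W)\in V$. The polarized right-hand side then yields \eqref{sosonV} after rearrangement.

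The final claim --- that no entry of $\vec P$ or $\vec Q$ vanishes identically on $V$ --- is the subtlest point. Since $p$ is a product of distinct irreducibles, the Nullstellensatz shows that any polynomial vanishing on $V$ is divisible by $p$; combining with the degree constraints $\deg_z P_j \leq n-1 < n = \deg_z p$ and $\deg_w Q_j \leq m-1 < m = \deg_w p$ forces such an entry to be the zero polynomial. The main obstacle is therefore a bookkeeping step: one must arrange the construction in Theorem \ref{symstablethm} so that no $A_j$ or $B_j$ is identically zero, which amounts to pruning identically vanishing components while preserving the sum of squares structure.
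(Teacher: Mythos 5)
Your construction of $\vec{P}$ and $\vec{Q}$ from $\vec{A}$ and $\vec{B}$, the translation identities, and the derivation of the main identity and of \eqref{sosonV} all essentially match the paper's argument; the paper substitutes $z\mapsto 1/z$ first and polarizes afterwards rather than the reverse, but the ordering is immaterial.

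The gap is in the final claim. Your Nullstellensatz reduction is fine: since $p$ is squarefree and $\deg_z P_j \leq n-1 < n$ while $\deg_w Q_j \leq m-1 < m$, any component vanishing on $V$ must in fact be the zero polynomial. But the assertion that ruling out a zero component of $\vec{A}$ or $\vec{B}$ ``amounts to pruning identically vanishing components while preserving the sum of squares structure'' does not hold up. If some $B_j\equiv 0$, dropping it yields a $\mathbb{C}^{m-1}$-valued vector, which contradicts the requirement that $\vec{Q}$ be $\mathbb{C}^m$-valued; padding the shorter vector back out with a zero entry simply reinstates the offending component. You must genuinely prove that a zero component is impossible, and nothing in Theorem \ref{symstablethm} forces this. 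The paper explicitly defers the point to the end of the proof of Theorem \ref{repthm}: the lurking isometry built from the pair $(\vec{P},\vec{Q})$ produces an $(m+n)\times(m+n)$ unitary whose associated determinant is a polynomial of degree at most $(n,m)$ vanishing on $V$, hence a constant multiple of $p$ by minimality of $p$. If a component of $\vec{P}$ or $\vec{Q}$ were identically zero, the isometry could instead be set up on $\mathbb{C}^{m+n-1}$ and the resulting determinant would vanish on $V$ while having degree strictly less than $n$ in $z$ or strictly less than $m$ in $w$, contradicting minimality. That minimality argument, supplied by the representation theorem, is the missing content; it is not a bookkeeping step.
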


This last formula \eqref{sosonV} is all that is needed to prove the
representation theorem (and indeed part of the approach in \cite{AM05}
is prove a similar formula. However, with our approach we are able to
``fill out'' the formula \eqref{sosonV} to the rest of $\mathbb{C}^2$.

Using Theorem \ref{GWthm}, we can provide additional information about
this representation when $V$ has no singularities on $\mathbb{T}^2$.
Recall that when a plane curve $V \subset \mathbb{C}^2$ is given as
the zero set of a polynomial with distinct irreducible factors, it has
a singularity at a point in $V$ if and only if both partial
derivatives of the defining polynomial vanish at that point.

\begin{theorem} \label{refinerep} With all notations and assumptions
  as in Theorem \ref{repthm}, there is a  $\mathbb{C}^m$-valued
  polynomial $\vec{Q}(z,w)$ of degree at most $(n, m-1)$ with at most
  finitely many zeros on $V$ such that
\[
\Phi(z) \vec{Q}(z,w) = w \vec{Q}(z,w)
\]
for all $(z,w) \in V$. 

In addition, if $V$ has no singularities on $\mathbb{T}^2$, $\vec{Q}$
may be chosen so that when we write
\[
\vec{Q}(z,w) = Q(z) \begin{pmatrix} 1 \\ w \\ \vdots \\
  w^{m-1} \end{pmatrix}
\]
where $Q(z)$ is an $m\times m$ matrix polynomial of degree at most
$n$ in each entry, we have that $Q(z)$ is invertible for all $z \in
\overline{\mathbb{D}}$. In particular, $\vec{Q}(z,w)$ has no zeros in
$\cbidisk$.  

\end{theorem}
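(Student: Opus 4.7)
The backbone is Theorem \ref{sosdist}, which furnishes $\vec{P}$ of bidegree $\leq (n-1, m)$ and $\vec{Q}$ of bidegree $\leq (n, m-1)$ satisfying the reproducing-kernel-style identity
\[
(1-z\bar{Z})\ip{\vec{P}(z,w)}{\vec{P}(Z,W)} = (1-w\bar{W})\ip{\vec{Q}(z,w)}{\vec{Q}(Z,W)}
\]
for $(z,w), (Z,W) \in V$. Rewriting this as an equality of Gram matrices produces a lurking isometry: the assignment
\[
\vec{Q}(z,w) \oplus z\vec{P}(z,w) \longmapsto w\vec{Q}(z,w) \oplus \vec{P}(z,w),
\]
defined on the span obtained as $(z,w)$ ranges over $V$, is an isometry between two subspaces of $\mathbb{C}^m \oplus \mathbb{C}^n$. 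Extending it to a unitary $U = \begin{pmatrix} A & B \\ C & D \end{pmatrix}$ on $\mathbb{C}^m \oplus \mathbb{C}^n$ and, if necessary, passing to a minimal realization so that $D$ has no unimodular eigenvalues, one reads off
\[
A\vec{Q}(z,w) + zB\vec{P}(z,w) = w\vec{Q}(z,w), \quad C\vec{Q}(z,w) + zD\vec{P}(z,w) = \vec{P}(z,w)
\]
on $V$. Solving the second relation for $\vec{P} = (I - zD)^{-1}C\vec{Q}$ and substituting into the first yields $\Phi(z)\vec{Q}(z,w) = w\vec{Q}(z,w)$ on $V$. The assertion that $\vec{Q}$ has only finitely many zeros on $V$ is immediate from the final clause of Theorem \ref{sosdist}: no entry of $\vec{Q}$ vanishes identically on any of the finitely many irreducible components of $V$.

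For the second half, decompose $\vec{Q}$, which has bidegree $\leq (n, m-1)$, as $\vec{Q}(z,w) = Q(z)(1, w, \dots, w^{m-1})^t$ for an $m \times m$ matrix polynomial $Q(z)$ of $z$-degree $\leq n$; this representation is automatic from the degree bound, and the real content to be proved is the boundary invertibility of $Q$. My strategy is to obtain $\vec{Q}$ not by an arbitrary invocation of Theorem \ref{sosdist} but rather by applying a Geronimo--Woerdeman type structure theorem to the $\mathbb{T}^2$-symmetric companion polynomial $q(z,w) = z^n p(1/z, w)$ produced by Lemma \ref{corrlemma}. This $q$ is $\mathbb{T}^2$-symmetric with no zeros on $\undist$, and the hypothesis that $V$ has no singularities on $\mathbb{T}^2$ transfers, through the change of variables, into the assertion that $q$ has only smooth zeros on $\mathbb{T}^2$. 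The extension of Theorem \ref{GWthm} flagged in the remark after that theorem applies exactly in this regime and provides a matrix polynomial $B_q(z)$ with $z^n \overline{B_q(1/\bar{z})}$ invertible on $\overline{\mathbb{D}}$. Undoing the $z \mapsto 1/z$ correspondence converts this ``reflected'' invertibility for $q$ into unreflected invertibility for $p$, producing a $Q(z)$ that is invertible on all of $\overline{\mathbb{D}}$; consequently $\vec{Q}(z,w) \neq 0$ throughout $\cbidisk$.

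The main obstacle is the boundary invertibility of $Q$ at points of $\mathbb{T}$. In the open disk the argument is essentially generic: for $z_0 \in \mathbb{D}$ such that $\Phi(z_0)$ has $m$ distinct eigenvalues $w_1, \dots, w_m$, the vectors $\vec{Q}(z_0, w_j)$ are the corresponding eigenvectors and the Vandermonde structure of $(1, w_j, \dots, w_j^{m-1})^t$ forces the columns of $Q(z_0)$ to be linearly independent, so $\det Q$ is not identically zero. This generic reasoning, however, breaks down at points where eigenvalues collide or at $z_0 \in \mathbb{T}$ where $V$ actually meets the boundary, and it is precisely at these boundary points that the no-singularities hypothesis must be invoked --- channeled through a sharpened Geronimo--Woerdeman decomposition that tolerates smooth zeros on $\mathbb{T}^2$ --- to secure full boundary invertibility of $Q$.
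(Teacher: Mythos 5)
Your proof of the first half (existence of $\vec{Q}$ with finitely many zeros on $V$ satisfying $\Phi\vec{Q}=w\vec{Q}$) is fine, and as the paper notes this is really a restatement of what was already established in the proof of Theorem \ref{repthm}. The minor quibble is that ``passing to a minimal realization'' is not by itself enough to rule out unimodular eigenvalues of $D$ --- the paper's argument uses the distinguished variety property to show that a unimodular eigenvalue of $D$ would force $p$ to contain a factor $(\lambda z - 1)$, which is impossible --- but you are essentially gesturing at that same step.

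The second half, however, has a genuine gap. You propose to apply the extension of Theorem \ref{GWthm} flagged in the Remark (for polynomials with no zeros on $\undist$ and finitely many on $\mathbb{T}^2$) directly to the companion polynomial $q(z,w)=z^n p(1/z,w)$. There are two problems with this. First, that extension is \emph{not proved in this paper} --- the Remark explicitly defers it to ``a future article,'' so you cannot invoke it here. Second, and more importantly, even if that extension were available it would not give what you need: the Remark's conclusion is that $B(z)$ (or $A(w)$) is invertible on $\overline{\mathbb{D}}$ \emph{except possibly at certain boundary points} where $q$ itself vanishes, which falls short of the full boundary invertibility of $Q(z)$ you are claiming. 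The paper's actual route avoids both problems by never applying a Geronimo--Woerdeman theorem to $q$: instead, the no-singularities-on-$\mathbb{T}^2$ hypothesis is fed into Theorem \ref{thm:tildenozeros} to show that the \emph{different} polynomial $\widetilde{q_z}+\widetilde{q_w}$ has no zeros on the entire closed bidisk $\cbidisk$, so that the \emph{original} Theorem \ref{GWthm} (for strictly stable polynomials) applies to it. Since the construction of $\vec{Q}$ in the proofs of Theorems \ref{symstablethm} and \ref{sosdist} proceeds by applying the sums-of-squares decomposition to $\widetilde{q_z}+\widetilde{q_w}$ (with $a=b=1$), the invertibility of the resulting matrix factor feeds through to give $Q(z)$ invertible on all of $\overline{\mathbb{D}}$. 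You should replace the step invoking the unproven extension of Theorem \ref{GWthm} on $q$ with this argument targeting $\widetilde{q_z}+\widetilde{q_w}$.
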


As a corollary, we get the following extension theorem.

\begin{theorem} \label{extendthm} Let $V$ be a distinguished variety
  with no singularities on $\mathbb{T}^2$ and let $\Phi$, $Q$, and
  $\vec{Q}$ be as in Theorem \ref{refinerep}.  Then, for any
  polynomial $f \in \mathbb{C}[z,w]$, the rational function
\[
F(z,w) := (1,0, \dots, 0) Q(z)^{-1} f(zI_m, \Phi(z)) \vec{Q}(z,w)
\]
is equal to $f$ on $V\cap\cbidisk$ and we have the estimates
\[
\begin{aligned}
|F(z,w)| &\leq ||Q(z)^{-1}||\ |\vec{Q}(z,w)| \sup_{V\cap\mathbb{D}^2} |f|\\
& \leq \sqrt{m} ||Q(z)^{-1}||\  ||Q(z)|| \sup_{V\cap\mathbb{D}^2} |f| \\
& \leq C \sup_{V\cap\mathbb{D}^2} |f|
\end{aligned}
\]
for all $(z,w) \in \mathbb{D}^2$, where
\[
C = \sqrt{m} \sup_{z\in\mathbb{D}} ||Q(z)^{-1}||\ ||Q(z)||.
\]
Here we are taking operator norm of the matrices $Q(z)$ and
$Q(z)^{-1}$.
\end{theorem}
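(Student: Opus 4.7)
The plan is to verify the pointwise identity $F = f$ on $V \cap \cbidisk$ using the eigenvector property from Theorem \ref{refinerep}, then obtain the three norm bounds in sequence, the heart of the matter being a matrix-valued maximum modulus argument for $f(zI_m,\Phi(z))$.

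First, I would verify that $F = f$ on $V \cap \cbidisk$. Since $\Phi(z)\vec Q(z,w) = w\vec Q(z,w)$ on $V$ and $zI_m$ (a scalar matrix) commutes with $\Phi(z)$, an induction on monomials gives $f(zI_m,\Phi(z))\vec Q(z,w) = f(z,w)\vec Q(z,w)$ on $V$. Plugging into the definition of $F$, the scalar $f(z,w)$ factors out, and
\[
Q(z)^{-1}\vec Q(z,w) = Q(z)^{-1}Q(z)(1,w,\ldots,w^{m-1})^t = (1,w,\ldots,w^{m-1})^t,
\]
so left-multiplying by $(1,0,\ldots,0)$ picks off $f(z,w)$. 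This gives $F(z,w) = f(z,w)$ at every point of $V$ where $\vec Q$ is nonzero; since $\vec Q$ vanishes at only finitely many points on $V$ and $F$ is continuous on $\cbidisk$ (it is rational with no poles there because $Q$ is invertible on $\overline{\mathbb D}$ and $\Phi$ is analytic on $\overline{\mathbb D}$), the identity extends to all of $V \cap \cbidisk$.

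For the first inequality, straightforward operator-norm bookkeeping gives
\[
|F(z,w)| \le \|Q(z)^{-1}\|\,\|f(zI_m,\Phi(z))\vec Q(z,w)\| \le \|Q(z)^{-1}\|\,|\vec Q(z,w)|\,\|f(zI_m,\Phi(z))\|_{op},
\]
so I would need the key bound $\|f(zI_m,\Phi(z))\|_{op} \le \sup_{V \cap \mathbb D^2}|f|$ for every $z \in \overline{\mathbb D}$. On $\mathbb T$, $\Phi(z)$ is unitary, hence normal and unitarily diagonalizable with eigenvalues $w_1(z),\ldots,w_m(z)$; by the determinantal description in Theorem \ref{repthm} these are precisely the $w \in \mathbb T$ with $(z,w) \in V$. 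Since $zI_m$ and $\Phi(z)$ are simultaneously diagonalized, $f(zI_m,\Phi(z))$ is also unitarily diagonal with eigenvalues $f(z,w_j(z))$, so $\|f(zI_m,\Phi(z))\|_{op} = \max_j |f(z,w_j(z))| \le \sup_{V \cap \mathbb T^2}|f|$, which equals $\sup_{V \cap \mathbb D^2}|f|$ by the maximum modulus principle applied to $f|_V$. Because $\Phi$ is rational with no poles on $\overline{\mathbb D}$, the map $z \mapsto f(zI_m,\Phi(z))$ is analytic on $\overline{\mathbb D}$, and $\log\|f(zI_m,\Phi(z))\|_{op}$ is subharmonic (as a supremum of $\log$ of moduli of holomorphic scalar functions $v \mapsto \ip{f(zI_m,\Phi(z))v}{u}$ over unit vectors $u,v$), so the circle bound propagates to all of $\mathbb D$.

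The second inequality comes from writing $\vec Q(z,w) = Q(z)(1,w,\ldots,w^{m-1})^t$ and estimating $|(1,w,\ldots,w^{m-1})^t| = (1 + |w|^2 + \cdots + |w|^{2m-2})^{1/2} \le \sqrt m$ on $\overline{\mathbb D}$, yielding $|\vec Q(z,w)| \le \sqrt m\,\|Q(z)\|$. The third just takes the supremum of $\|Q(z)^{-1}\|\,\|Q(z)\|$ over $z \in \mathbb D$, which is finite because $Q$ is a matrix polynomial (hence bounded on $\overline{\mathbb D}$) and $Q^{-1}$ is rational with no poles on $\overline{\mathbb D}$ by Theorem \ref{refinerep}. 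The single subtle point is the matrix-valued maximum modulus argument in the second paragraph; everything else reduces to linear algebra and the structural facts about $\Phi$, $Q$, $\vec Q$ supplied by Theorem \ref{refinerep}.
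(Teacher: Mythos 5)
Your proposal is correct and follows essentially the same path as the paper: the eigenvector identity from Theorem \ref{refinerep} gives $F=f$ on $V$, the key norm bound $\|f(zI_m,\Phi(z))\| \leq \sup_{V\cap\mathbb{D}^2}|f|$ comes from normality of $\Phi$ on $\mathbb{T}$ plus a maximum modulus argument (the paper applies scalar maximum modulus to $\ip{f(zI_m,\Phi(z))\vec v_1}{\vec v_2}$; your subharmonicity-of-$\log\|\cdot\|$ phrasing is the same idea), and the last two estimates are the same elementary bounds. One small unnecessary detour: the computation $F=f$ on $V$ never actually breaks down where $\vec Q$ vanishes, since $Q(z)^{-1}\vec Q(z,w) = (1,w,\dots,w^{m-1})^t$ holds identically and the eigenvector relation is trivially true at zeros of $\vec Q$, so the continuity/finitely-many-zeros patch is not needed.
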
  

Naturally, the roles of $z$ and $w$ can be reversed in the above
theorem, which will sometimes yield a better constant in the
extension.

The above theorem clearly produces bounded analytic extensions from
$V$ to $\mathbb{D}^2$ for functions other than polynomials.  Indeed,
any function for which we can make sense of $f(zI_m, \Phi(z))$ on the
circle $\mathbb{T}$ will work.  We suspect this can be done for all of
$H^{\infty}(V\cap \mathbb{D}^2)$, but leave this for future work.

\section{Example of Theorem \ref{extendthm}}

Let $b(z)$ be a finite Blaschke product and consider the distinguished
variety 
\[
V = \{(z,w) \in \mathbb{D}^2 : w^m - b(z) = 0\}
\]
This can be represented as the zero set of
\[
\det( wI_m - \Phi(z))
\]
where $\Phi$ is the rational matrix valued inner function
\[
\Phi(z) = \begin{pmatrix} \mathbf{0} & I_{m-1} \\ b(z) &
  \mathbf{0}^t \end{pmatrix}.
\]
If we set
\[
\vec{Q}(z,w) = \begin{pmatrix} 1 \\ w \\ \vdots \\
  w^{m-1} \end{pmatrix}
\]
then
\[
\Phi(z) \vec{Q}(z,w) = w \vec{Q}(z,w)
\]
for $(z,w) \in V$.  Using our method of estimation from Theorem
\ref{extendthm} we see that any polynomial $f$ on $V\cap\mathbb{D}^2$
can be extended to $\mathbb{D}^2$ with its norm increased by at most
the factor $\sqrt{m}$.  (In this case $Q(z)$ from Theorem
\ref{extendthm} is constant and equal to $I_m$.)

When $m=2$ and $b(z) = z^3$ (i.e. $V = \{z^3-w^2=0\}$) this improves
Theorem 2.9 in Knese \cite{gK07}.

\section{Preliminaries} 
\label{sec:prelim}

As mentioned earlier, the following proposition allows us to think of
distinguished varieties in more global terms as subvarieties of
$\mathbb{C}^2$ satisfying \eqref{global}.

\begin{prop} \label{compprop} Let $V \subset \mathbb{D}^2$ be a
  distinguished variety defined by a polynomial $p \in
  \mathbb{C}[z,w]$ of minimal degree $(n,m)$.  Then, the extension of
  $V$ to $\mathbb{C}^2$
\[
V' = \{(z,w) \in \mathbb{C}^2: p(z,w) = 0\}
\]
satisfies
\begin{equation}
V' \subset \mathbb{D}^2 \cup \mathbb{T}^2 \cup \mathbb{E}^2 \label{global}
\end{equation}
Conversely, if $p$ is a polynomial satisfying
\[
W := \{(z,w) \in \mathbb{C}^2: p(z,w) = 0\} \subset \mathbb{D}^2\cup
\mathbb{T}^2 \cup \mathbb{E}^2
\]
then $W\cap \mathbb{D}^2$ is a distinguished variety. 
\end{prop}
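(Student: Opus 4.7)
The plan is to prove the forward containment by successively ruling out each of the six ``mixed'' regions from meeting $V'$, and then to dispatch the converse with a short topological observation. Throughout I will use that minimality of $\deg p$ forces $p$ to be squarefree and each irreducible factor $p_i$ of $p$ to satisfy $V_i := Z_{p_i}$ meets $\mathbb{D}^2$; in particular the coordinate functions $z$ and $w$ are non-constant on each $V_i$.

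First I will rule out $V'\cap(\mathbb{D}\times\mathbb{T})$ (and by the symmetric $z\leftrightarrow w$ argument, $V'\cap(\mathbb{T}\times\mathbb{D})$) via the open mapping theorem. At a hypothetical $(z_0,w_0)$ with $|z_0|<1$ and $|w_0|=1$ on some $V_i$, each local holomorphic branch of $V_i$ makes $w$ a non-constant holomorphic function of a local parameter, so the open mapping theorem produces nearby branch points with $|w|<1$ and, by continuity, $|z|<1$. These lie in $V$, so $(z_0,w_0)\in\overline{V}\cap\partial(\mathbb{D}^2)\setminus(\partial\mathbb{D})^2$, contradicting the distinguished variety property.

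Next I will establish that $p$ is \etsym. Since $|\tilde{p}|=|p|$ on $\mathbb{T}^2$, $\tilde{p}$ vanishes on $\overline{V}\cap\mathbb{T}^2$; for each $V_i$ the distinguished variety property together with the previous step forces the topological boundary of $V_i\cap\mathbb{D}^2$ in $\overline{\mathbb{D}^2}$ (non-empty and of positive real dimension because affine algebraic curves in $\mathbb{C}^2$ are not compactly contained in $\mathbb{D}^2$) to lie in $V_i\cap\mathbb{T}^2$. Being an infinite subset of the irreducible curve $V_i$, it is Zariski-dense in $V_i$, so $\tilde{p}$ vanishes on $V_i$ and hence on $V'$. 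Squarefreeness of $p$ and matching degrees force $\tilde{p}=cp$ for unimodular $c$, so $V'$ is invariant under the involution $\sigma(z,w)=(1/\bar z,1/\bar w)$. Applying $\sigma$ to the first step immediately yields $V'\cap(\mathbb{E}\times\mathbb{T})=V'\cap(\mathbb{T}\times\mathbb{E})=\emptyset$.

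For the remaining diagonally mixed regions, I will suppose $(z_0,w_0)\in V_i\cap(\mathbb{D}\times\mathbb{E})$ and use the connected smooth part $V_i^*$ to join a smooth point near $(z_0,w_0)$ to a smooth point in $V_i\cap\mathbb{D}^2$ by a continuous path $\gamma$. Along $\gamma$, $|w|$ transitions from $>1$ to $<1$, so at the first crossing the corresponding $|z|$ must be $<1$ (forbidden by the first step), $>1$ (forbidden by what was just derived), or $=1$; the last case I will eliminate by choosing $\gamma$ generically together with a local analysis at smooth points of $V_i\cap\mathbb{T}^2$ showing that such points lie on the common boundary of $V_i\cap\mathbb{D}^2$ and $V_i\cap\mathbb{E}^2$ (by the distinguished variety condition and $\sigma$-invariance), which forces the directional derivatives of $|z|^2-1$ and $|w|^2-1$ along $V_i$ to be real-proportional with positive constant and excludes diagonally mixed components from being locally adjacent. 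The $\sigma$-image then handles $V'\cap(\mathbb{E}\times\mathbb{D})$. The converse is immediate: any point of $\overline{W\cap\mathbb{D}^2}\cap\partial(\mathbb{D}^2)$ is a limit of points of $W$, hence in $W$ (as $W$ is closed), and the hypothesis forces it into $\mathbb{T}^2=(\partial\mathbb{D})^2$ since it cannot lie in $\mathbb{D}^2$ or $\mathbb{E}^2$. The main obstacle I anticipate is the \etsym\ step, where I must justify that each $V_i\cap\mathbb{T}^2$ is a positive-dimensional real-analytic set by combining the distinguished variety hypothesis, the first step, and the non-compactness of affine algebraic curves — and relatedly, the local analysis in the final step which rules out the degenerate simultaneous crossing of $|z|=1$ and $|w|=1$.
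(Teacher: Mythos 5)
Your proposal takes a genuinely different route from the paper. The paper never invokes the $\mathbb{T}^2$-symmetry of $p$ in proving Proposition~\ref{compprop}; rather it proves (for each irreducible component) that the elementary symmetric functions of the $\mathbb{D}$-contained roots are rational via the argument principle and Schwarz reflection, manufactures a rational function $R$ whose numerator must be a constant multiple of $p$, and reads off the conclusion. The symmetry $\tilde p = cp$ is proved afterwards, as Proposition~\ref{symprop}, \emph{using} Proposition~\ref{compprop}. You instead extract a version of the symmetry first and then eliminate the mixed regions one at a time by open-mapping, the involution $\sigma(z,w)=(1/\bar z,1/\bar w)$, and a local analysis at $\mathbb{T}^2$. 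This is conceptually attractive and, when the gaps are filled, works; it also makes the geometric content more transparent than the paper's more computational argument.

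However, the two places you flag are indeed genuine gaps, and the second is more serious than you indicate. First, for the \etsym\ step you need $V_i\cap\mathbb{T}^2$ to be Zariski-dense in $V_i$, but the parenthetical justification ``affine algebraic curves are not compactly contained in $\mathbb{D}^2$'' only yields that the boundary of $V_i\cap\mathbb{D}^2$ is non-empty, not that it is infinite; if it were a finite set $F$, then $V_i\cap\mathbb{D}^2$ would be open \emph{and} closed in the connected set $V_i\setminus F$, forcing $V_i\subset\overline{\mathbb{D}^2}$ and contradicting unboundedness of an affine curve — this (or the paper's argument-principle count) is the missing step. Second and more importantly, your proposed local analysis asserts that ``smooth points of $V_i\cap\mathbb{T}^2$ lie on the common boundary of $V_i\cap\mathbb{D}^2$ and $V_i\cap\mathbb{E}^2$ (by the distinguished variety condition and $\sigma$-invariance).'' That is close to circular: the distinguished variety hypothesis only controls points of $\mathbb{T}^2$ that are limits of $V_i\cap\mathbb{D}^2$, so a priori a given smooth point of $V_i\cap\mathbb{T}^2$ could just as well separate $V_i\cap(\mathbb{D}\times\mathbb{E})$ from $V_i\cap(\mathbb{E}\times\mathbb{D})$, which is precisely the configuration you are trying to exclude. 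The repair is to argue globally rather than along a single path: consider $h := (|z|^2-1)(|w|^2-1)$ on the smooth part of $V_i$. After Steps 1 and 3, $h$ vanishes only on $V_i\cap\mathbb{T}^2$, and at a generic smooth transverse crossing both factors change sign simultaneously, so $h$ does \emph{not} change sign across $V_i\cap\mathbb{T}^2$. Hence the sign of $h$ is constant on the smooth locus minus the finitely many non-generic points, and since it is positive on $V_i\cap\mathbb{D}^2$ (nonempty), it is non-negative everywhere, which is exactly the statement that $V_i$ misses $\mathbb{D}\times\mathbb{E}$ and $\mathbb{E}\times\mathbb{D}$. With those two repairs your proof is complete and is a valid alternative to the one in the paper.
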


\begin{proof}
The converse statement is clear, so we shall focus on the main claim.

It suffices to prove the proposition for each irreducible component of
$V$, so we assume $p$ and $V$ are irreducible.  We emphasize that we
are starting from the assumption
\[
\overline{V}\cap \partial (\mathbb{D}^2) = \overline{V} \cap \mathbb{T}^2
\]
and proving 
\[
V' = Z_p \subset \mathbb{D}^2 \cup \mathbb{T}^2 \cup \mathbb{E}^2.
\] 

First, we claim that $p$ can have no zeros on $\mathbb{D}\times
\mathbb{T}$.  For suppose $p(z_0,w_0)=0$ with $|z_0|<1$ and $|w_0|=1$.
There is a positive integer $k$ such that for $z$ near $z_0$,
$p(z,\cdot)$ has a list of zeros $w_1(z), w_2(z), \dots, w_k(z)$
(listed with possible repetitions) satisfying $w_j(z_0)=w_0$ and the
property that symmetric functions of $w_1, \dots, w_k$ are holomorphic
near $z_0$.  (These are all straightforward consequences of the
Weierstrass preparation theorem or other theorems about the local
behavior of algebraic curves.)  

It cannot be the case that all roots $w_1(z), w_2(z), \dots, w_k(z)$
stay within $\mathbb{T}\cup \mathbb{E}$.  One way to see this is to
observe that $\pi(z) = \prod_{j=1}^{k} w_j(z)$ is holomorphic near
$z_0$ and $\pi(z_0) = w_0^k$.  If $\pi$ is nonconstant, it must assume
a value with modulus less than one, and this implies some sequence of
points in $V$ tends to $\mathbb{D}\times \mathbb{T}$, a contradiction.
If $\pi$ is constant (and therefore unimodular), either every $w_j(z)$
is unimodular valued (forcing all roots to be constant as a function
of $z$, which cannot occur) or some $w_j(z)$ takes values in
$\mathbb{D}$ (which again forces a contradiction) or some $w_j(z)$
takes values in $\mathbb{E}$ (and this forces some other $w_r(z)$ to
assume values in $\mathbb{D}$).  Hence, $p$ has no zeros on
$\mathbb{D} \times \mathbb{T}$.

The number of zeros of $p(z,\cdot)$ (counting multiplicities) which
are contained in $\mathbb{D}$ is constant as a function of
$z \in \mathbb{D}$.  Indeed,
\[
N := \int_{\mathbb{T}} \frac{p_w(z,w)}{p(z,w)} \frac{dw}{2\pi i}
\]
is a holomorphic function on the disk which counts the roots of
$p(z,\cdot)$ that are contained in $\mathbb{D}$, by the residue
theorem.  This is valid since $p(z,w)$ is nonzero for $(z,w) \in
\mathbb{D} \times \mathbb{T}$.  Integer-valued holomorphic functions
are constant; hence, $N$ is constant.

For $z \in \mathbb{D}$, let $w_1(z), \dots, w_N(z)$ be some listing of
the $N$ roots of $p(z,\cdot)$ including multiplicities which are
contained in the disk.

From here our strategy will be to show that elementary symmetric
functions of the roots of $p(z,\cdot)$ contained in $\mathbb{D}$ are
actually rational as a function of $z \in \mathbb{D}$.  Recall the
elementary symmetric functions are given by
\[
s_k(z) := \sum_{1\leq j_1 < \dots < j_k\leq N} w_{j_1}(z) \cdots
w_{j_k}(z)
\]
These are holomorphic on the disk because the functions
\[
f_k(z) := \sum_{j=0}^N (w_j(z))^k = \int_{\mathbb{T}} \frac{p_w
  (z,w)}{p(z,w)} w^k \frac{dw}{2\pi i}
\] 
are holomorphic and the elementary symmetric functions are polynomials
in the $f_k$.  

Since the roots of $p(z,\cdot)$ tend to $\mathbb{T}$ as $z$ tends to
$\mathbb{T}$, the Schwarz reflection principle tells us that $s_N(z) =
w_1(z)\cdots w_N(z) = \prod_{j=1}^{N} w_j (z)$ can be extended
meromorphically to the extended complex plane as
$1/\overline{s_N(1/\bar{z})}$ and is therefore rational.

Consider now another family of symmetric polynomials of the roots:
\[
g_k(z) := \sum_{j=1}^{N} \prod_{t\ne j} w_t (z)^k.
\]
Again, $g_k$ is holomorphic in the disk.  We claim $f_k$ extends
meromorphically across $\mathbb{T}$ to the function
\[
\hat{f}_k(z) :=
\overline{g_k(1/\bar{z})}/\overline{s_N(1/\bar{z})^k} = \sum_{j=1}^{N}
\frac{1}{\overline{w_j(1/\bar{z})^k}}  
\]
defined and meromorphic on $\mathbb{E}\cup\{\infty\}$.  

Let $\la \in \mathbb{T}$ be a point at which $p(\la, \cdot)$ has $m$
distinct roots (only finitely many points fail to satisfy this since
$p$ and $p_w$ can have no common factor).  For $z$ in a small
neighborhood of $\la$, $p(z,\cdot)$ has $m$ distinct roots $W_1(z),
\dots, W_m(z)$ which can be given as holomorphic functions of $z$, by
the implicit function theorem.  Since $V$ is a distinguished variety,
each $W_j(z)$ is either always in the disk for $z$ in the disk or
always in $\mathbb{E}$ for $z$ in the disk.  We may assume the first
$N$ of the $W_j$ are the roots $w_1,\dots, w_N$.  Since each $W_j(z)$
tends to $\mathbb{T}$ as $z$ tends to $\mathbb{T}$, the Schwarz
reflection principle says $W_j(z) = 1/\overline{W_j(1/\bar{z})}$ for
$z$ in a neighborhood of $\la$.  Therefore, the function
$\sum_{j=0}^{N} W_j(z)^k$ is holomorphic in a neighborhood of $\la$,
agrees with $f_k(z)$ for $z \in \mathbb{D}$, and agrees with
$\hat{f}_k(z)$ on $\mathbb{E}$.  So, $f_k$ extends to be meromorphic
on the extended complex plane except at possibly finitely many points
on $\mathbb{T}$.  But, $f_k$ is bounded in a neighborhood of the
circle and its singularities are therefore removable.  Therefore,
$f_k$ is a rational function.  This implies the symmetric functions
$s_k$ are rational.

Observe that the function
\[
R(z,w) = w^N+ \sum_{j=0}^{N-1} (-1)^{N-j} s_{N-j}(z) w^j
\]
is rational and vanishes on $V$: it equals
\[
(w-w_1(z))\cdots (w-w_N(z))
\]
on the disk
and 
\[
(w-1/\overline{w_1(1/\bar{z})})\cdots (w-1/\overline{w_N(1/\bar{z})})
\]
on $\mathbb{E}$.  Since $p$ is irreducible and $N\leq m$, it must be
the case that $N=m$ and the numerator of $R$ is a constant multiple of
$p$.

As $R$ was designed to have the property that all roots of
$R(z,\cdot)$ are in $\mathbb{D}$ for $z \in \mathbb{D}$ and in
$\mathbb{E}$ for $z \in \mathbb{E}$, it follows that
\[
V' \subset \mathbb{D}^2 \cup \mathbb{T}^2 \cup \mathbb{E}^2.
\]
\end{proof}

\begin{prop}\label{symprop}
  If $V$ is a distinguished variety and $V'$ is its extension to
  $\mathbb{C}^2$ (as in Proposition \ref{compprop}) then $V'$ is
  symmetric with respect to the two-torus: for any $z\ne 0$ and $w\ne
  0$
\[
(z,w) \in V' \text{ if and only if } \left(\frac{1}{\bar{z}},
  \frac{1}{\bar{w}}\right) \in V'
\]
\end{prop}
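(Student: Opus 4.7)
The plan is to prove the stronger statement that $p$ itself is \etsym, i.e., $\tilde{p}=cp$ for some unimodular constant $c$, where $\tilde{p}(z,w) = z^n w^m \overline{p(1/\bar z, 1/\bar w)}$ is the reflection at the degree $(n,m)$. Once this identity is in hand, the symmetry of $V'$ is immediate: on $\{zw\neq 0\}$ the equation $\tilde{p}(z,w)=0$ is the same as $p(1/\bar z, 1/\bar w)=0$, so $Z_{\tilde p}\cap\{zw\neq 0\}$ is the image of $Z_p\cap\{zw\neq 0\}$ under the involution $(z,w)\mapsto(1/\bar z,1/\bar w)$; if $\tilde p$ and $p$ have the same zero set, then $V'\cap\{zw\neq 0\}$ is preserved by this involution.

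I would begin with the observation that $|p|=|\tilde p|$ on $\mathbb{T}^2$, so $Z_p\cap\mathbb{T}^2 = Z_{\tilde p}\cap\mathbb{T}^2$, and then argue that this common zero set is infinite. Factor $p=p_1\cdots p_k$ into distinct irreducibles; this is possible because minimality of degree forces $p$ to be radical, and further forces each $p_i$ to vanish somewhere in $\mathbb{D}^2$ (otherwise dropping that factor would give a defining polynomial of lower degree). Each component $V_i := \{p_i=0\}\cap\mathbb{D}^2$ is then a non-empty complex one-dimensional subvariety of the Stein domain $\mathbb{D}^2$, so its closure must meet $\partial\mathbb{D}^2$; by the distinguished-variety condition this meeting lies in $\mathbb{T}^2$, and a local analysis of the roots $w_j(z)$ of $p_i(z,\cdot)$ as $z\to\mathbb{T}$ (of exactly the kind carried out in the proof of Proposition~\ref{compprop}) shows that $\overline{V_i}\cap\mathbb{T}^2$ is a real one-dimensional set, hence infinite.

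The main step is then a Bezout-type divisibility argument applied factor by factor. For each $i$, $p_i$ is irreducible and shares infinitely many zeros with $\tilde p$; since coprime polynomials in $\mathbb{C}[z,w]$ can share only finitely many zeros, $p_i$ must divide $\tilde p$. As the $p_i$ are pairwise distinct irreducibles in the UFD $\mathbb{C}[z,w]$, their product $p=p_1\cdots p_k$ divides $\tilde p$. Comparing the bidegrees $\deg_z \tilde p\leq n = \deg_z p$ and $\deg_w \tilde p\leq m = \deg_w p$ forces the quotient $\tilde p/p$ to be a constant $c$, and $|c|=1$ follows from $|p|=|\tilde p|$ on $\mathbb{T}^2$ (since $p$ does not vanish identically there). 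I expect the main obstacle to be establishing that $\overline{V_i}\cap\mathbb{T}^2$ is infinite for \emph{each} irreducible component, which rests on both the minimality argument (to ensure every $p_i$ contributes to $V$) and the local boundary analysis underlying Proposition~\ref{compprop}.
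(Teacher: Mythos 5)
Your proof is correct, and it takes a genuinely different route from the paper's.  Both arguments lean on Proposition~\ref{compprop} to know that $V'\subset\mathbb{D}^2\cup\mathbb{T}^2\cup\mathbb{E}^2$ and on the identity $|p|=|\tilde p|$ on $\mathbb{T}^2$, but after that they diverge.  The paper first reduces to the case of irreducible $p$ (using that reflection is multiplicative across irreducible factors), then for each fixed $z\in\mathbb{T}$ compares the one-variable polynomials $p(z,\cdot)$ and $\tilde p(z,\cdot)$: since all of their roots lie on $\mathbb{T}$ and their moduli agree there, they are proportional, which yields the pointwise identity $\tilde p_0(z)\,p(z,w)=p_m(z)\,\tilde p(z,w)$ on $\mathbb{T}\times\mathbb{C}$; this extends to $\mathbb{C}^2$ by polynomial uniqueness, and irreducibility finishes.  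You instead work with all irreducible factors at once and invoke a Bezout-type coprimality argument: each $p_i$ shares an infinite set of zeros on $\mathbb{T}^2$ with $\tilde p$, forcing $p_i\mid\tilde p$, hence $p\mid\tilde p$ by square-freeness, and a bidegree count pins down the quotient as a unimodular constant.  Your approach trades the paper's explicit leading-coefficient bookkeeping for the algebraic fact that coprime bivariate polynomials have finitely many common zeros, which is arguably cleaner.  One small simplification you could make: you don't actually need the Stein-domain/boundary-exit argument to show $Z_{p_i}\cap\mathbb{T}^2$ is infinite.  Once Proposition~\ref{compprop} is in hand, each $p_i$ must have positive degree in $w$ (otherwise $Z_{p_i}$ would be a vertical line, leaving the set $\mathbb{D}^2\cup\mathbb{T}^2\cup\mathbb{E}^2$), so for all but finitely many $z\in\mathbb{T}$ the one-variable polynomial $p_i(z,\cdot)$ has at least one root, and Proposition~\ref{compprop} forces that root onto $\mathbb{T}$; this already produces an infinite subset of $Z_{p_i}\cap\mathbb{T}^2$ without any appeal to minimality, Steinness, or to $p_i$ vanishing in $\mathbb{D}^2$.  (You do still need minimality for square-freeness, of course.)
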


\begin{proof} Assume $V$ is defined as the zero set of a polynomial
  $p$ of degree $(n,m)$.  As before, we may assume $V$ and $p$ are
  irreducible.  It suffices to prove $p$ is essentially $\mathbb{T}^2$-symmetric
  (recall this means $p = c\tilde{p}$ for a unimodular constant $c$),
  since then $p(z,w) = 0$ if and only if $p(1/\bar{z}, 1/\bar{w})=0$.
  Write
\[
p(z,w) = \sum_{j=0}^{m} p_j(z) w^j
\]
for one variable polynomials $p_0,p_1, \dots, p_m$ with degrees at
most $n$.  Then,
\[
\tilde{p}(z,w) = \sum_{j=0}^{m} \tilde{p}_{m-j} (z) w^j
\]
where $\tilde{p}_j(z)$ is the one variable reflection of the
polynomial $p_j$:
\[
\tilde{p}_j (z) = z^n \overline{p_j\left(\frac{1}{\bar{z}}\right)}.
\]

For any $z \in \mathbb{T}$, all zeros of $p(z,\cdot)$ are on the
circle since $V' \subset \mathbb{D}^2 \cup \mathbb{T}^2 \cup
\mathbb{E}^2$. Since
\[
|p(z,w)| = |\tilde{p}(z,w)| \text{ for all } (z,w) \in \mathbb{T}^2
\]
 $p(z,\cdot)$ and $\tilde{p}(z,\cdot)$ have the same zeros when $z \in
 \mathbb{T}$ (including repeated roots which can be measured by the
 order of vanishing).

This implies that for $z \in \mathbb{T}$
\[
\tilde{p}_0 (z) p(z,\cdot) = p_m (z) \tilde{p} (z,\cdot)
\]
since the leading coefficients of these two polynomials with the same
zeros are equal; i.e. for $(z,w) \in \mathbb{T}\times \mathbb{C}$
\begin{equation} \label{onlyhappen}
\tilde{p}_0 (z) p(z,w) = p_m (z) \tilde{p} (z,w).
\end{equation}
This can only happen if \eqref{onlyhappen} holds for all $(z,w) \in
\mathbb{C}^2$, since $\mathbb{T}^2$ is a set of uniqueness for
polynomials.

By irreducibility of $p$ we see that $p = c\tilde{p}$ for some
constant $c$.  Since $|p| = |\tilde{p}|$ on $\mathbb{T}^2$, the
constant $c$ must be unimodular.  This proves $p$ is essentially
symmetric.
\end{proof}  

\begin{proof}[Proof of Lemma \ref{corrlemma}]
Let $p$ be a polynomial of degree $(n,m)$ defining the distinguished
  variety $V$ in the sense of Definition \ref{pdefines}.  Then,
\begin{equation} \label{qandp}
q(z,w) := z^n p\left(\frac{1}{z}, w \right) = w^m \overline{p\left(\bar{z},
  \frac{1}{\bar{w}} \right)} 
\end{equation}
defines a polynomial of degree strictly equal to $(n,m)$.  (The middle
expression has degree $m$ in $w$ and the expression on the right has
degree $n$ in $z$.)

Also, by equation \eqref{qandp}, since $p$ has no zeros on the set
\[
(\mathbb{E}\times \mathbb{D})\cup (\mathbb{D}\times \mathbb{E}) \cup
(\mathbb{T}\times \mathbb{D}) \cup (\mathbb{D} \times \mathbb{T})
\]
it follows that $q$ has no zeros on $\undist$ except possibly at
$(0,0)$.  Since zeros of two variable polynomials are never isolated,
we may conclude $q$ has no zeros on $\undist$.

Finally, since $p$ is \etsym\ it is clear that $q$ is too.
\end{proof}

\section{Sum of Squares Formulas} \label{sossec}

In this section we prove Theorems \ref{thm:tildenozeros},
\ref{symstablethm} and \ref{sosdist}.  We will identify $V$ with its
extension $V'$ to $\mathbb{C}^2$ from the previous section via a
defining polynomial of minimal degree.

\begin{proof}[Proof of Theorem \ref{thm:tildenozeros}] Let $q$ be a
  \tsym\ polynomials with no zeros on $\mathbb{D}^2$ (or $\undist$).
  For each $t \in (0,1]$ define
\[
q_t(z,w) := q(tz,w)
\]
(not to be confused with the partial derivative notations).  Then,
$q_t$ has no zeros on $\mathbb{D}^2$ and therefore
\[
|q_t(z,w)|^2 - |\widetilde{q_t}(z,w)|^2 \geq 0 \text{ on } \cbidisk.
\]
Since $q$ is \tsym, it can be shown that
\[
\widetilde{(q_t)} (z,w) = t^n q(z/t, w).
\]
Considering the following expression for $(z,w) \in \cbidisk$
\[
\frac{|q(tz,w)|^2 - |t^nq(z/t,w)|^2}{1-t^2} \geq 0 
\]
and taking the limit as $t\nearrow 1$ yields
\[
n|q(z,w)|^2 - 2\text{Re}(zq_z(z,w) \overline{q(z,w)}) \geq 0 \text{ on
} \cbidisk.
\]
By the Lemma \ref{modlemma} with $a=1, b=0$,
\[
\begin{aligned}
&n^2|q(z,w)|^2 - 2\text{Re}(zq_z(z,w) \overline{n q(z,w)})\\
=& |\widetilde{q_z}(z,w)|^2 - |zq_z(z,w)|^2 \geq 0 \text{ on }
\cbidisk
\end{aligned}
\]
Therefore, any zero of $\widetilde{q_z}$ on $\cbidisk$ must also be a
zero of $z q_z$, and by the identity $nq = \widetilde{q_z} + zq_z$
(from Lemma \ref{refformulas}), any zero of $\widetilde{q_z}$ on
$\cbidisk$ is a zero of $q$.  So, if $q$ has no zeros on
$\mathbb{D}^2$ (resp. $\undist$), then $\widetilde{q_z}$ has no zeros
on $\mathbb{D}^2$ (resp. $\undist$).  Similar statements hold for
$\widetilde{q_w}$.

By Lemma \ref{modlemma} (used three times), when $a, b\geq 0$ and
$(z,w) \in \cbidisk$ we have
\begin{align} \label{nextobserve}
&|a\widetilde{q_z} + b \widetilde{q_w}|^2 - |azq + bwq_w|^2 \\ \nonumber
=& (an+bm)[ (an+bm)|q|^2 - 2\text{Re}[(azq_z+ bw q_w)\bar{q}] ] \\
\nonumber
=& (an+bm)[ (a/n)( n^2|q|^2 - 2\text{Re}(zq_z n\bar{q})) + (b/m)(
m^2|q|^2 - 2\text{Re}(wq_w m\bar{q})) ] \\ \nonumber
=& (an+bm)[ (a/n)( |\widetilde{q_z}|^2 - |zq_z|^2) + (b/m)(
|\widetilde{q_w}|^2 - |wq_w|^2 ) ] \\ \nonumber
& \geq (an+bm) [ (a/n)(1-|z|^2)|\widetilde{q_z}|^2 +
(b/m)(1-|w|^2)|\widetilde{q_w}|^2 ]
\end{align}

where the inequality follows from
\[
\begin{aligned}
&|\widetilde{q_z}|^2 - |zq_z|^2 = |\widetilde{q_z}|^2(1-|z|^2)
+|z|^2(|\widetilde{q_z}|^2 - |q_z|^2)  \\
&\geq  (1-|z|^2) |\widetilde{q_z}|^2.
\end{aligned}
\]
(Note: for brevity we are omitting the argument $(z,w)$ in front of
all of the polynomials above).

Since $\widetilde{q_z}$ and $\widetilde{q_w}$ have no zeros on
$\mathbb{D}^2$, it now follows that for $a,b > 0$
$a\widetilde{q_z}+b\widetilde{q_w}$ has no zeros on $\mathbb{D}^2$.
If $a\widetilde{q_z}+b\widetilde{q_w}$ has a zero on
$\overline{\mathbb{D}^2} \setminus \mathbb{D}^2$, then the left side
of \eqref{nextobserve} vanishes to at least order two.  This implies
$(1-|z|^2) |\widetilde{q_z}|^2$ and $(1-|w|^2) |\widetilde{q_w}|^2$
both vanish to order at least two.  Since $(1-|z|^2)$ and $(1-|w|^2)$
vanish to at most order 1 at a point in $\cbidisk\setminus
\mathbb{D}^2$, it follows that both $\widetilde{q_z}$ and
$\widetilde{q_w}$ vanish at a zero of $a\widetilde{q_z} + b
\widetilde{q_w}$.  Therefore, $a\widetilde{q_z}+b\widetilde{q_w}$ has
no zeros on $\overline{\mathbb{D}^2} \setminus (Z_{\widetilde{q_z}}
\cap Z_{\widetilde{q_w}})$ when $a,b >0$.
\end{proof}

\begin{proof}[Proof of Corollary \ref{symcor4}] There is no harm in
  assuming $q$ is \tsym.  Setting $a=m$ and $b=n$ in
  \eqref{nextobserve} yields
\[
\begin{aligned}
  & 2\text{Re}((nmq(z,w) - mzq_z(z,w) - nwq_w(z,w))\overline{q(z,w)})\\
& \geq
  (m/n)(1-|z|^2)|\widetilde{q_z}(z,w)|^2 +
  (n/m)(1-|w|^2)|\widetilde{q_w}(z,w)|^2 \geq 0
\end{aligned}
\]
on $\mathbb{D}^2$.  Since $\widetilde{q_z}$ and $\widetilde{q_w}$ have
no zeros on $\mathbb{D}^2$ (resp. $\undist$) when $q$ has no zeros on
$\mathbb{D}^2$ (resp. $\undist$), it follows that
\[
nmq(z,w) - mzq_z(z,w) - n wq_w(z,w)
\]
has no zeros on $\mathbb{D}^2$ (resp. $\undist$) when $q$ has no zeros
on $\mathbb{D}^2$ (resp. $\undist$).

By Lemma \ref{refformulas} the reflection of this
polynomial at the degree $(n,m)$ is equal to
\[
-nmq(z,w)+mzq_z(z,w)+nwq_w(z,w)
\]
and therefore 
\[
nmq(z,w)-mzq_z(z,w) - nwq_w(z,w)
\]
is \etsym.

The statement that $mzp_z - n wp_w$ defines a distinguished
variety when $p$ defines a distinguished variety follows by applying
the previous arguments to $q(z,w) = z^n p(1/z, w)$ and then converting
back to expressions involving $p$ and $p$'s partial derivatives.
\end{proof}

\begin{proof}[Proof of Theorem \ref{symstablethm}]
Apply Theorem \ref{stablethm} to $a\widetilde{q_z}+b\widetilde{q_w}$
and use the formula in Lemma \ref{modlemma} to prove the existence of
$\vec{A}$ and $\vec{B}$ satisfying
\[
\begin{aligned}
  (an+bm)|q(z,w)|^2 &- 2\text{Re}( (azq_z(z,w)+bwq_w(z,w)) \overline{q(z,w)})\\
  &= (1-|z|^2) |\vec{A}(z,w)|^2 + (1-|w|^2) |\vec{B}(z,w)|^2
\end{aligned}
\]

This can be polarized into the equation
\begin{align} \label{sospolar}
  &(an+bm) q(z,w) \overline{q(Z,W)} \\ \nonumber
  &- (azq_z(z,w)+bwq_w(z,w))\overline{q(Z,W)} \\ \nonumber 
  &- q(z,w) \overline{aZq_z(Z,W)+bWq_w(Z,W)}\\ \nonumber
  =& (1-z \bar{Z})
  \ip{\vec{A}(z,w)}{\vec{A}(Z,W)} + (1-w \bar{W})
  \ip{\vec{B}(z,w)}{\vec{B}(Z,W)}.
\end{align}

Suppose now that $q$ is a product of distinct irreducible factors.
Suppose $\vec{A}$ and $\vec{B}$ have infinitely many zeros on $Z_q$.
This means they must have a factor in common with $q$.  Specifically,
$q=fg$ for some relatively prime polynomials $f$ and $g$, and $f$
divides $\vec{A}$ and $\vec{B}$. Substituting $(Z,W) = (0,0)$ into
\eqref{sospolar} and rearranging produces
\[
\begin{aligned}
(an+bm)q(z,w) \overline{q(0,0)} &- \ip{\vec{A}(z,w)}{\vec{A}(0,0)} -
\ip{\vec{B}(z,w)}{\vec{B}(0,0)} \\
&= (azq_z(z,w)+bwq_w(z,w))\overline{q(0,0)}
\end{aligned}
\]
The polynomial $f$ divides the left hand side and hence divides
$azq_z+bwq_w$ (note $q(0,0) \ne 0$).  Using $q=fg$ we have
\[
\begin{aligned}
azq_z(z,w) + bwq_w(z,w) =& (azf_z(z,w)+bwf_w(z,w))g(z,w) \\
&+ (azg_z(z,w)
+b wg_w(z,w) ) f(z,w)
\end{aligned}
\]
and therefore $f$ divides $azf_z+bwf_w$ since $f$ and $g$ are
relatively prime.  Since the degree of $azf_z+bwf_w$ is less than or
equal to the degree of $f$ (in each variable separately) and since $f(0,0)
\ne 0$, this implies $azf_z + bwf_w \equiv 0$.  As all factors of $q$
are \etsym\ polynomials with no zeros on $\mathbb{D}^2$ (and can be
made \tsym), applying the formula in Lemma \ref{modlemma} to $f$ we
arrive at the contradictory conclusion that $f\equiv 0$.    

Thus, $\vec{A}$ and $\vec{B}$ can only have finitely many zeros on
$Z_q$ when $q$ is a product of distinct irreducible factors.

\end{proof}

\begin{proof}[Proof of Theorem \ref{sosdist}]
  By Lemma \ref{corrlemma}, Theorem \ref{symstablethm} can be applied
  to $q$ given by
\[
q(z,w) := z^n p(1/z,w).
\]
Before doing so we list some simple formulas relating partial
derivatives of $q$ and $p$:
\[
\begin{aligned}
zq_z(z,w) &= nz^n p(1/z, w) - z^{n-1} p_z (1/z,w) \\
wq_w (z,w) & = z^n p_w(1/z,w) 
\end{aligned}
\]
and therefore
\begin{align} \label{therefore}
azq_z(z,w) + bwq_w(z,w) &= anz^np(1/z,w) -(az^{n-1} p_z(1/z,w)- bwz^n
p_w(1/z,w)) \\ \nonumber
& = a z^n \widetilde{p_z}(1/z, w) + bwz^n p_w(1/z, w)
\end{align}

By Theorem \ref{symstablethm} using equation \eqref{therefore}, there
exists a $\mathbb{C}^n$-valued polynomial $\vec{A}(z,w)$ of degree at
most $(n-1,m)$ and a $\mathbb{C}^m$-valued polynomial $\vec{B}(z,w)$
of degree at most $(n,m-1)$ with finitely many common zeros on $Z_q$
such that
\[
\begin{aligned}
& (an+bm)|z^n p(1/z,w)|^2 \\
&- 2\text{Re}([anz^np(1/z,w) -(az^{n-1} p_z(1/z,w)
- bwz^n p_w(1/z,w))] \overline{z^n p(1/z,w)}) \\
&= (1-|z|^2) |\vec{A}(z,w)|^2 + (1-|w|^2) |\vec{B}(z,w)|^2
\end{aligned}
\]
Replacing $z$ with $1/z$ and multiplying through by $|z^n|^2$ yields
\[
\begin{aligned}
& (an+bm)|p(z,w)|^2 - 2\text{Re}((anp(z,w) -(az p_z(z,w)
- bw p_w(z,w))) \overline{p(z,w)}) \\
& = (bm-an)|p(z,w)|^2 + 2\text{Re}((az p_z(z,w)
- bw p_w(z,w)) \overline{p(z,w)})\\
&= (|z|^2-1) |z^{n-1} \vec{A}(1/z,w)|^2 + (1-|w|^2) |z^n \vec{B}(1/z,w)|^2
\end{aligned}
\]

Defining $\vec{P}(z,w) = z^{n-1} \vec{A}(1/z,w)$ and $\vec{Q}(z,w) =
z^n \vec{B}(1/z,w)$, we get
\[
\begin{aligned}
&  (bm-an)|p(z,w)|^2 + 2\text{Re}((az p_z(z,w)
  - bw p_w(z,w)) \overline{p(z,w)})\\
  & +(1-|z|^2) |\vec{P}(z,w)|^2 \\
  &= (1-|w|^2) |\vec{Q}(z,w)|^2
\end{aligned}
\]
By the polarization theorem for holomorphic functions, we get the
formula:
\begin{align} \label{sosdistformula}
(bm-an) p(z,w)\overline{p(Z,W)} &+ \left(azp_z(z, w)-bw
    p_w(z,w)\right)\overline{p(Z,W)}\\ \nonumber & +
  (1-z\bar{Z})\ip{\vec{P}(z,w)}{\vec{P}(Z, W)} \\
  \nonumber =&
  p(z,w)\left(b\overline{W p_w(Z,W)}-a\overline{Z p_z(Z,
      W)}\right)\\ \nonumber &+ (1-w\bar{W}) \ip{\vec{Q}(z,w)}{\vec{Q}(Z,W)}
\end{align}

For $(z,w), (Z,W) \in V$, the above reduces to
\begin{equation} \label{sosonV2}
(1-z\bar{Z}) \ip{\vec{P}(z,w)}{\vec{P}(Z,W)} = (1-w\bar{W})
\ip{\vec{Q}(z,w)}{\vec{Q}(Z,W)} 
\end{equation}
 
The fact that none of the components of $\vec{P}$ or $\vec{Q}$ is
identically zero when the degree $(n,m)$ is minimal will follow from
our proof of Theorem \ref{repthm}.  For the moment, we at least know
that $\vec{P}$ and $\vec{Q}$ have finitely many common zeros on $V$.
(There is no circular reasoning going on here as will made apparent at
the end of the proof of Theorem \ref{repthm}.)
\end{proof}

\section{Representation of Distinguished Varieties} \label{mainthmsec}

\begin{proof}[Proof of Theorem \ref{repthm}:]
  We continue directly from the end of the proof of Theorem
  \ref{sosdist}.  Equation \eqref{sosonV2} can be rearranged into
\[
\begin{aligned}
&\ip{\vec{P}(z,w)}{\vec{P}(Z,W)} + w\bar{W} \ip{\vec{Q}(z,w)}{\vec{Q}(Z,W)} \\
& = z\bar{Z} \ip{\vec{P}(z,w)}{\vec{P}(Z,W)} + \ip{\vec{Q}(z,w)}{\vec{Q}(Z,W)}
\end{aligned}
\]
and from here much of the proof follows a standard ``lurking
isometry''/systems theory argument similar to that found in
\cite{AM05}.

The map sending
\[
\begin{pmatrix} \vec{Q}(z,w) \\ z \vec{P}(z,w) \end{pmatrix}
\mapsto \begin{pmatrix} w\vec{Q}(z,w) \\ \vec{P}(z,w) \end{pmatrix}
\]
for each $(z,w) \in V$ defines a unitary on the span of elements of
$\mathbb{C}^{m+n}$ of the form on the left to the span of the elements
of $\mathbb{C}^{m+n}$ of the form on the right which we may extend to
a unitary matrix $U: \mathbb{C}^m \oplus \mathbb{C}^n \to \mathbb{C}^m
\oplus \mathbb{C}^n$.  Let us write $U$ in block form
\[
U = \begin{pmatrix} A & B \\ C & D \end{pmatrix}.
\]
Then, 
\begin{align}
A \vec{Q}(z,w) + zB\vec{P}(z,w) &= w \vec{Q}(z,w) \label{AB} \\ 
C \vec{Q}(z,w) + zD\vec{P}(z,w) &= \vec{P}(z,w) \label{CD}
\end{align}
for all $(z,w) \in V$. 

This implies
\[
\begin{pmatrix} \mathbf{0} \\ \mathbf{0} \end{pmatrix} =
\begin{pmatrix} A - wI_m & zB \\ C & zD - I_n \end{pmatrix}
\begin{pmatrix} \vec{Q}(z,w) \\ \vec{P}(z,w) \end{pmatrix}
\]
for all $(z,w) \in V$.  By Theorem \ref{symstablethm} and by
construction, $\vec{P}$ and $\vec{Q}$ have at most finitely many
common zeros on $V$.  Therefore,
\[
\det\begin{pmatrix} A - wI_m & zB \\ C & zD - I_n \end{pmatrix} = 0
\]
for all but finitely many points on $V$ and hence all points on $V$.
Since this is a polynomial of degree at most $(n,m)$, it must equal a
constant multiple of $p$, by minimality of $p$.  This proves the first
representation formula in Theorem \ref{repthm}.

Equation \eqref{CD} implies
\[
(I_n - zD)^{-1} C\vec{Q}(z,w) = \vec{P} (z,w)
\]
and coupled with equation \eqref{AB} we have
\begin{equation} \label{evenmore}
\{A+zB(I_n - zD)^{-1}C - w I_m \} \vec{Q}(z,w) = 0 \text{ for } (z,w)
\in V.
\end{equation}

Define $\Phi(z) := A+zB(I_n - zD)^{-1}C$. A well-known calculation
proves $\Phi(z)$ is a rational inner function.  Indeed, the fact that
$U$ is a unitary and that
\[
 U \begin{pmatrix} I_m \\ z(I_n-zD)^{-1}C \end{pmatrix}  =
\begin{pmatrix} \Phi(z) \\ (I_n-zD)^{-1}C \end{pmatrix}
\]
implies
\[
I_m+ |z|^2 C^* (I_n-\bar{z}D^*)^{-1} (I_n - zD)^{-1} C = \Phi(z)^*
\Phi(z) +  C^* (I_n-\bar{z}D^*)^{-1} (I_n - zD)^{-1} C.
\]
Rearranging gives
\[
I_m- \Phi(z)^* \Phi(z) = (1-|z|^2)C^* (I_n-\bar{z}D^*)^{-1} (I_n -
zD)^{-1} C.
\]

\begin{claim} $D$ has no unimodular eigenvalues.
\end{claim}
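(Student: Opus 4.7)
The plan is an argument by contradiction. Suppose $D$ has a unimodular eigenvalue $\zeta$, with $Dv = \zeta v$, $|\zeta|=1$, and $v\ne 0$. Since $D$ is the bottom-right block of the unitary $U$, the vector $(0,v)^t \in \mathbb{C}^m\oplus\mathbb{C}^n$ maps under $U$ to $(Bv, Dv)^t$, and unitarity gives
\[
\|Bv\|^2 + \|Dv\|^2 = \|(0,v)^t\|^2 = \|v\|^2.
\]
But $\|Dv\| = |\zeta|\|v\| = \|v\|$, so the equality above forces $Bv = 0$. (This is the only contraction-theoretic input we need; we will not require the stronger fact that $D^*v = \bar\zeta v$.)

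The key step is to evaluate the block matrix representing $p$ at $z = \bar\zeta$ on $(0,v)^t$:
\[
\begin{pmatrix} A - wI_m & \bar\zeta B \\ C & \bar\zeta D - I_n \end{pmatrix}
\begin{pmatrix} 0 \\ v \end{pmatrix}
= \begin{pmatrix} \bar\zeta Bv \\ (\bar\zeta\zeta - 1)v \end{pmatrix}
= \begin{pmatrix} 0 \\ 0 \end{pmatrix},
\]
valid for \emph{every} $w \in \mathbb{C}$, since $Bv = 0$ and $\bar\zeta\zeta = 1$. Consequently the determinant of this block matrix vanishes identically in $w$ at $z = \bar\zeta$. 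Since $p(z,w)$ is a constant multiple of this determinant, we conclude $p(\bar\zeta, w) \equiv 0$ as a polynomial in $w$, so the linear factor $(z - \bar\zeta)$ divides $p(z,w)$ in $\mathbb{C}[z,w]$, and the entire complex line $\{\bar\zeta\}\times \mathbb{C}$ lies in $V' = Z_p$.

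This directly contradicts Proposition \ref{compprop}: for any $w\in \mathbb{D}$ the point $(\bar\zeta, w)$ belongs to $\mathbb{T}\times \mathbb{D}$, which is disjoint from $\mathbb{D}^2 \cup \mathbb{T}^2 \cup \mathbb{E}^2$. Thus our hypothesis is untenable, and $D$ has no unimodular eigenvalues. The proof has no real obstacle; the content is in spotting the right combination, namely that $Bv = 0$ together with the obvious annihilation by $\bar\zeta D - I_n$ already kills the matrix on $(0,v)^t$ uniformly in $w$, which immediately translates a would-be unimodular eigenvalue of $D$ into a forbidden ``vertical line'' inside $V'$.
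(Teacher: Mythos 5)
Your proof is correct and follows the same underlying strategy as the paper: argue by contradiction, use the unitarity of $U$ to deduce $Bv = 0$ from $Dv = \zeta v$ with $|\zeta|=1$, and then show that this forces the vertical line $\{\bar\zeta\}\times\mathbb{C}$ into $Z_p$, contradicting the distinguished variety property. The execution differs in one step: the paper performs a unitary change of coordinates on $\mathbb{C}^n$ (fixing the $\mathbb{C}^m$ block) to put $U$ into a block form from which the factor $(\zeta z - 1)$ of $p$ is read off explicitly, whereas you skip the change of basis and observe directly that the block matrix
\[
\begin{pmatrix} A - wI_m & zB \\ C & zD - I_n \end{pmatrix}
\]
annihilates $(0,v)^t$ at $z = \bar\zeta$ for every $w$, hence its determinant vanishes identically in $w$ along $z = \bar\zeta$. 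Your version is a little more elementary and self-contained since it never needs to exhibit the factorization of $p$, only its vanishing along the line; the paper's version, by exhibiting the explicit factor $(\zeta z - 1)$, makes the source of the violation slightly more transparent. Either way, both arguments correctly invoke the determinantal representation of $p$ (which has already been established at this point in the proof of Theorem \ref{repthm}) and both reach the contradiction that $Z_p$ meets $\mathbb{T}\times\mathbb{D}$.
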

\begin{proof} Suppose there is a nonzero vector $\vec{v} \in
  \mathbb{C}^n$ such that $D\vec{v} = \la \vec{v}$ for some $\la \in
  \mathbb{T}$.  Then,
\[
U \begin{pmatrix} \mathbf{0} \\ \vec{v} \end{pmatrix}
= \begin{pmatrix} B\vec{v} \\ \la \vec{v} \end{pmatrix}
\]
and the unitarity of $U$ implies $B\vec{v} = 0$.  Hence,
$\begin{pmatrix} \mathbf{0} \\ \vec{v} \end{pmatrix}$ is an
eigenvector for $U$ and after a unitary change of coordinates (which
will not affect the $\mathbb{C}^m$ portion of $\mathbb{C}^m \oplus
\mathbb{C}^n$), $U$ can be put into the form
\[
\begin{pmatrix} A & B' & \mathbf{0} \\ C' & D' & \mathbf{0} \\
  \mathbf{0}^t & \mathbf{0}^t & \la \end{pmatrix}.
\]

This implies $p$ is a constant multiple of
\[
(\la z-1) \det\begin{pmatrix} A-wI_m & zB' \\ C' & zD'-I_{n-1} \end{pmatrix}
\]
which contradicts the fact that $p$ defines a distinguished variety.
\end{proof}

By the claim, it now follows that $\Phi(z)^* \Phi(z) = I_m$ for $z \in
\mathbb{T}$. 
 In words, $\Phi$ is unitary valued on the circle.

On $V\cap \mathbb{D}^2$, zeros of $\vec{Q}$ coincide with zeros of
$\vec{P}$ by equation \eqref{sosonV2}. Hence, we see that $\vec{Q}$ has
at most finitely many zeros on $V$.  So, by \eqref{evenmore}
\[
\det(wI_m - \Phi(z)) = 0
\]
for all $(z,w) \in V$. The rational function
\[
r(z,w) = \det(wI_m - \Phi(z))
\]
has numerator with degree at most $m$ in $w$ and it vanishes on $V$,
and this implies that it must be a constant multiple of $p$.  As was
already mentioned above, 
\[
\Phi(z) \vec{Q}(z,w) = w \vec{Q}(z,w) \text{ for } (z,w) \in V
\]
and this completes the proof of Theorem \ref{repthm}.

If any of the components of $\vec{P}$ or $\vec{Q}$ had been
identically zero this same proof could have yielded a polynomial of
strictly lower degree than $p$ (in either $z$ or $w$) which vanished
on $V$.  This cannot happen since $p$ is by definition minimal.
Therefore, we have also completed the proof Theorem \ref{sosdist}.
\end{proof}

\begin{proof}[Proof of Theorem \ref{refinerep}]
The existence of $\vec{Q}$ as stated in the theorem follows from the
previous proof.  

If $V=Z_p$ has no singularities on $\mathbb{T}^2$, then $p_z$ and $p_w$
have no common zeros on $V\cap \mathbb{T}^2$.  Using the formulas $np =
zp_z+\widetilde{p_z}$ and $mp = wp_w + \widetilde{p_w}$, all zeros of
$p_z$ and $p_w$ on $\mathbb{T}^2$ must occur on $V$.  Hence, $p_z$ and
$p_w$ have no common zeros on all of $\mathbb{T}^2$.  Reverting to
\tsym\ polynomials with no zeros on $\undist$:
\[
q(z,w) := z^n p(1/z, w)
\]
we see that $q_z$ and $q_w$ have no common zeros on $\mathbb{T}^2$.
By Theorem \ref{thm:tildenozeros}, $\widetilde{q_z}$ and $\widetilde{q_w}$
each have no zeros on $\undist$.  So, again by Theorem
\ref{thm:tildenozeros}, $\widetilde{q_z}+\widetilde{q_w}$ has no zeros
on $\cbidisk \setminus (Z_{\widetilde{q_z}} \cap Z_{\widetilde{q_w}}) =
  \cbidisk$.  

Theorem \ref{GWthm} can be applied to
$\widetilde{q_z}+\widetilde{q_w}$ and the proof of Theorem
\ref{symstablethm} can be repeated with the additional knowledge that
$\vec{B}(z,w)$ when written in matrix form as
\[
\vec{B}(z,w) = B(z) \begin{pmatrix} 1 \\ w \\ \vdots \\
  w^{m-1} \end{pmatrix} 
\]
has the property that $z^n\overline{B(1/\bar{z})}$ is invertible for
$z \in \overline{\mathbb{D}}$.  Repeating the proof of Theorem
\ref{sosdist}, we see that
\[
\vec{Q}(z,w) = z^n w^{m-1} \overline{\vec{B}(1/\bar{z}, 1/\bar{w})} =
z^n \overline{B(1/\bar{z})} \begin{pmatrix} w^{m-1} \\ w^{m-2} \\
  \vdots \\ 1 \end{pmatrix} 
\]
and therefore the matrix $Q(z)$ satisfying 
\[
\vec{Q}(z,w) = Q(z)  \begin{pmatrix} 1 \\ w \\ \vdots \\
  w^{m-1} \end{pmatrix} 
\]
is equal to $z^n \overline{B(1/\bar{z})}$ with its columns in reverse
order.  Thus, $Q(z)$ is invertible for $z\in
\overline{\mathbb{D}}$.

\end{proof}

\begin{proof}[Proof of Theorem \ref{extendthm}]
  If $f \in \mathbb{C}[z,w]$, then for any vectors $\vec{v_1}$,
  $\vec{v_2} \in \mathbb{C}^m$ we have that for each $z \in
  \mathbb{D}$
\[
\begin{aligned}
|\ip{f(zI_m,\Phi(z)) \vec{v_1}}{\vec{v_2}}| &\leq \sup_{\lambda \in
  \mathbb{T}} |\ip{f(\lambda I_m, \Phi(\lambda))\vec{v_1}}{\vec{v_2}}|
\\
&\leq \sup_{V\cap\mathbb{D}^2} |f| |\vec{v_1}| |\vec{v_2}|
\end{aligned}
\]
by the maximum modulus principle and since $f(\lambda I_m,
\Phi(\lambda))$ is normal with eigenvalues given by the values of $f$
on $V$ when $\lambda \in \mathbb{T}$ (recall $\Phi$ is unitary on the
circle).  Therefore, in operator norm
\[
||f(zI_m, \Phi(z))|| \leq \sup_{V\cap\mathbb{D}^2} |f|.
\]

Now, we examine 
\[
F(z,w) := (1,0,\dots, 0) Q(z)^{-1} f(zI_m, \Phi(z)) \vec{Q}(z,w).
\]
For $(z,w) \in V\cap\mathbb{D}^2$, $f(zI_m, \Phi(z)) \vec{Q}(z,w) =
f(z,w) \vec{Q}(z,w)$ and therefore when $(z,w) \in V\cap\mathbb{D}^2$
\[
F(z,w) = f(z,w) (1,0,\dots, 0) Q(z)^{-1} Q(z) \begin{pmatrix} 1 \\ w
  \\ \vdots \\ w^{m-1} \end{pmatrix} = f(z,w)
\]
i.e. $F$ is an extension of $f$.  

The estimates on $F$ are now straightforward:
\[
\begin{aligned}
|F(z,w)| &\leq ||Q(z)^{-1}||\ ||f(z I_m, \Phi(z))||\ |\vec{Q}(z,w)|
\\
&\leq ||Q(z)^{-1}||\ |\vec{Q}(z,w)| \sup_{V\cap \mathbb{D}^2} |f| \\ 
& \leq \sqrt{m} ||Q(z)^{-1}||\ ||Q(z)|| \sup_{V\cap \mathbb{D}^2} |f|
\end{aligned}
\]
for all $(z,w) \in \mathbb{D}^2$ since 
\[
|\vec{Q}(z,w)| = \left|Q(z)  \begin{pmatrix} 1 \\ w \\ \vdots \\
  w^{m-1} \end{pmatrix} \right| \leq \sqrt{m} ||Q(z)||
\]
for $|w|\leq 1$.
\end{proof}

\section{Appendix} \label{appendix} In this appendix we explain how to
obtain Theorem \ref{GWthm} from the results in Knese \cite{gK08}.  Let
$q$ be a polynomial of degree at most $(n,m)$ with no zeros on
$\cbidisk$.

Define a probability measure $\rho$ on $\mathbb{T}^2$ by
\[
d\rho = \frac{c^2}{|q(z, w)|^2} \frac{dz}{2\pi i} \frac{dw}{2\pi i}
\]
where $c$ is chosen to make $\rho$ a bona fide probability
measure. Let $\ip{\cdot}{\cdot}_{\rho}$ denote the standard inner
product on $L^2(\rho)$.  Consider the following $n$ dimensional
subspace of 2 variable polynomials defined using the given inner
product:
\[
\begin{aligned}
S_1 = &\{\text{polynomials of degree at most } (n-1,m)\} \\
&\ominus_\rho
\{wp(z,w): p \text{ is a polynomial of degree at most } (n-1,m-1)\}
\end{aligned}
\]
where we have written $\ominus_\rho$ to emphasize that this is an
orthogonal complement performed using $\ip{}{}_{\rho}$.  

\begin{claim} No nonzero element of $S_1$ is divisible by a polynomial
  of the form $L(z,w) = w-w_0$ where $w_0 \in \overline{\mathbb{D}}$.  
\end{claim}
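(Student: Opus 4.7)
The plan is to argue by contradiction. Suppose $f \in S_1$ is nonzero and $f(z,w) = (w-w_0)\,g(z,w)$ for some polynomial $g$ and some $w_0 \in \overline{\mathbb{D}}$. Since $f$ has degree at most $(n-1,m)$, the factor $g$ must have degree at most $(n-1,m-1)$, so that $wg$ lies in exactly the subspace with respect to which $S_1$ was defined as an orthogonal complement; in particular $\ip{f}{wg}_\rho = 0$.

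I would then carry out the short computation on $\mathbb{T}^2$ that drives the contradiction. Since $\bar w = 1/w$ on the torus, $(w-w_0)\bar w = 1 - w_0 \bar w$, and unfolding the relation $\ip{f}{wg}_\rho = 0$ produces
\[
\|g\|_\rho^2 \;=\; w_0 \int_{\mathbb{T}^2} \bar w\,|g(z,w)|^2\,d\rho.
\]
Taking absolute values and applying the triangle inequality for integrals yields $\|g\|_\rho^2 \leq |w_0|\,\|g\|_\rho^2$. Because $q$ is zero-free on $\cbidisk$, the density of $\rho$ on $\mathbb{T}^2$ is strictly positive; since $g$ is a nonzero polynomial, $\|g\|_\rho^2 > 0$, and hence $|w_0| \geq 1$.

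For the interior case $w_0 \in \mathbb{D}$ this is already a contradiction. The remaining case $|w_0| = 1$ is where a little care is needed: equality must then hold in the triangle inequality, which forces the integrand $\bar w\,|g|^2$ to have constant argument $\rho$-almost everywhere on $\mathbb{T}^2$. Since $g$ vanishes on only a real-algebraic set of Lebesgue measure zero and $\rho$ has strictly positive density, $\bar w$ itself would have to be of constant argument on a set of full measure in $\mathbb{T}^2$, which is absurd. The main obstacle is simply noticing that the boundary case $|w_0|=1$ requires this separate treatment through equality conditions in the triangle inequality; the interior case comes essentially for free from the orthogonality.
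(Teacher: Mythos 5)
Your argument is correct, but it organizes the computation differently from the paper in a way that makes life slightly harder for you. Both proofs rest on the same two ingredients: the orthogonality $\ip{f}{wg}_\rho = 0$ (because $wg$ has degree at most $(n-1,m)$ and lies in the subspace complemented off) and the fact that $|w|=1$ on $\mathbb{T}^2$. The paper packages these via the Pythagorean theorem: writing $w_0 g = f - wg$ with $f \perp wg$ gives
\[
|w_0|^2\|g\|_\rho^2 = \|f - wg\|_\rho^2 = \|f\|_\rho^2 + \|wg\|_\rho^2 = \|f\|_\rho^2 + \|g\|_\rho^2,
\]
so $\|f\|_\rho^2 = (|w_0|^2 - 1)\|g\|_\rho^2 > 0$ forces $|w_0| > 1$ \emph{strictly} in a single stroke, with no case analysis. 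You instead expand the inner product directly, pass through the triangle inequality, obtain $|w_0| \geq 1$, and must then handle $|w_0|=1$ separately via the equality condition (constant argument of $\bar w|g|^2$ $\rho$-a.e., contradiction). That boundary argument is valid — $g$ vanishes on a $\rho$-null set and $\rho$ is mutually absolutely continuous with Lebesgue measure on $\mathbb{T}^2$ — but it is extra work; the Pythagorean formulation avoids it entirely because it yields a strict inequality from the positivity of $\|f\|_\rho$. Worth internalizing the Pythagorean trick here: whenever you have $a = b + c$ with $b \perp c$ and a unitary symmetry in play, computing norms of both sides often kills boundary cases for free.
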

\begin{proof} Suppose $L$ divides a nonzero $p \in S_1$; i.e. $p(z,w) =
  (w-w_0)r(z,w)$ for some polynomial $r$ of degree at most
  $(n-1,m-1)$.  Then,   
\[
\begin{aligned}
|w_0|^2||r||^2_{L^2(\rho)} &= ||w_0 r||^2_{L^2(\rho)} =
||p-wr||^2_{L^2(\rho)} \\
&= ||p||^2_{L^2(\rho)} + ||r||^2_{L^2(\rho)}
\end{aligned}
\]
since $p$ is orthogonal to $wr$ and since multiplication by $w$ is an
isometry on $L^2(\rho)$. (We are slightly abusing notation here and
confounding $w$ with the function $(z,w) \mapsto w$.) Therefore,
$||p||^2_{L^2(\rho)} = (|w_0|^2-1)||r||^2_{L^2(\rho)}$ which can only
be positive when $|w_0| > 1$.
\end{proof}

Let $KS_1((z,w),(Z,W))$ be the reproducing kernel for $S_1$ using the
given inner product $\ip{}{}_\rho$ (for details on reproducing kernels
in this setting see \cite{gK08}).  For any orthonormal basis $\{E_1,
\dots, E_n\}$ of $S_1$ it is a fact that
\[
KS_1((z,w),(Z,W)) = \sum_{j=1}^{n} E_j(z,w) \overline{E_j(Z,W)}
\]
or in vector polynomial notation with $\vec{E}(z,w) = (E_1(z,w)
,\dots, E_n(z,w))^t$ we have
\[
KS_1((z,w),(Z,W)) = \ip{\vec{E}(z,w)}{\vec{E}(Z,W)}.
\]
Writing $\vec{E}(z,w)$ in matrix form
\begin{equation} \label{matrixform}
\vec{E}(z,w) = E(w) \begin{pmatrix} 1 \\ z \\ \vdots \\
  z^{n-1} \end{pmatrix} 
\end{equation}
we make the following claim.

\begin{claim} The $n\times n$ matrix $E(w)$ is invertible for all $w
  \in \overline{\mathbb{D}}$.  
\end{claim}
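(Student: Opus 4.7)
The plan is to deduce the second claim directly from the first claim by a linear algebra argument. Suppose, for contradiction, that $E(w_0)$ is singular for some $w_0 \in \overline{\mathbb{D}}$. Then there exists a nonzero row vector $\vec{c}^{\,t} \in \mathbb{C}^n$ lying in the left kernel of $E(w_0)$, so that $\vec{c}^{\,t} E(w_0) = \mathbf{0}$.

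Next, I would form the polynomial
\[
p(z,w) := \vec{c}^{\,t} \vec{E}(z,w) = \sum_{j=1}^n c_j E_j(z,w).
\]
Since $\{E_1,\dots,E_n\}$ is a basis of $S_1$ and $\vec{c}$ is nonzero, $p$ is a nonzero element of $S_1$. Evaluating at $w = w_0$ gives
\[
p(z, w_0) = \vec{c}^{\,t} E(w_0) \begin{pmatrix} 1 \\ z \\ \vdots \\ z^{n-1} \end{pmatrix} = 0 \quad \text{for all } z.
\]
Viewing $p(z,w)$ as a polynomial in $w$ with coefficients in $\mathbb{C}[z]$, the fact that $w_0$ is a root of every coefficient combination forces $w - w_0$ to divide $p(z,w)$ in $\mathbb{C}[z,w]$. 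This contradicts the previous claim, which asserts that no nonzero element of $S_1$ is divisible by any linear factor of the form $w - w_0$ with $w_0 \in \overline{\mathbb{D}}$.

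I do not anticipate any real obstacle: once one recognizes that singularity of $E(w_0)$ produces a nontrivial left null vector, the construction of the forbidden element of $S_1$ is immediate, and the implication ``$p(z,w_0) \equiv 0$ in $z$'' $\Rightarrow$ ``$(w-w_0) \mid p$'' is elementary. The only minor point to verify carefully is that $p$ is genuinely nonzero, which follows from the linear independence of the orthonormal basis $\{E_1,\dots,E_n\}$.
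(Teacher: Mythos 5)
Your proof is correct and follows essentially the same approach as the paper: take a nonzero left-null vector of $E(w_0)$, form the corresponding nonzero element of $S_1$, observe it vanishes on $w = w_0$ and hence is divisible by $w - w_0$, contradicting the previous claim. You simply make the factoring step slightly more explicit than the paper does.
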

\begin{proof} Suppose $E(w)$ is singular for some $w_0 \in
  \overline{\mathbb{D}}$.  Then, there is a nonzero vector $\vec{v} \in
  \mathbb{C}^n$ such that $\vec{v}^t E(w_0) = \mathbf{0}^t$.  This
  implies $\vec{v}^t \vec{E}(z,w) \in S_1$ vanishes on the set $w = w_0$.  By
  the previous claim this can only occur if $\vec{v}^t\vec{E}$ is
  identically zero, which cannot happen since $\{E_1, \dots, E_n\}$ is
  a basis for $S_1$ and $\vec{v}$ is nonzero.
\end{proof}
  
Consider a second subspace 
\[
\begin{aligned}
\widetilde{S_2} := &\{\text{polynomials of degree } (n,m-1)\} \\
& \ominus_{\rho} \{\text{polynomials of degree } (n-1,m-1)\}.
\end{aligned}
\]
Using arguments similar to the above it can be shown that the
reproducing kernel $K\widetilde{S_2}$ can be written as
\[
K\widetilde{S_2} ((z,w),(Z,W)) = \ip{\vec{F}(z,w)}{\vec{F}(Z,W)}
\]
where $\vec{F}$ is a $\mathbb{C}^m$-valued polynomial which when
written in matrix form
\[
\vec{F}(z,w) = F(z) \begin{pmatrix} 1 \\ w \\ \vdots \\
  w^{m-1} \end{pmatrix} 
\]
has the property that $z^n \overline{F(1/\bar{z})}$  is invertible for
all $z \in \overline{\mathbb{D}}$.

Finally we can give the connection to Theorem \ref{GWthm}.  Theorems
4.5 and 5.1 in Knese \cite{gK08} say
\begin{equation} \label{theoremssay}
\frac{|q(z,w)|^2}{c^2} - \frac{|\tilde{q}(z,w)|^2}{c^2} = (1-|z|^2)
|\vec{E}(z,w)|^2 + (1-|w|^2)|\vec{F}(z,w)|^2
\end{equation}
which easily implies Theorem \ref{GWthm} by the above discussion.

\bibliography{repdv}

\end{document}